\documentclass[12pt,a4paper]{amsart}
\usepackage{amsmath,amssymb,amsthm}
\usepackage{geometry}
\usepackage{hyperref}

\allowdisplaybreaks[4]
\newtheorem{theorem}{Theorem}[section]
\newtheorem{lemma}[theorem]{Lemma}
\newtheorem{proposition}[theorem]{Proposition}
\newtheorem{corollary}[theorem]{Corollary}

\theoremstyle{definition}
\newtheorem{definition}[theorem]{Definition}
\newtheorem{remark}[theorem]{Remark}

\title[]{Admissible Bases and Generic Lattice Ideals}
\author[]{Anargyros Katsabekis}
\address {Department of Mathematics, University of Ioannina, 45110 Ioannina, Greece} \email{katsampekis@uoi.gr}
\keywords{Positive lattice; Generic lattice ideal; Admissible basis;
Neighbor of the origin; Gr\"obner basis; Algebraic Scarf complex}
\subjclass{13F65, 13F20, 13P10, 13D02}

\begin{document}

\begin{abstract} We introduce the notion of an admissible basis for rank-three positive
lattices in $\mathbb Z^4$, namely a $\mathbb Z$-basis
$\{\mathbf c_1,\mathbf c_2,\mathbf c_3\}$ satisfying three explicit
sign conditions~\textup{(I)}, \textup{(II)}, and~\textup{(III)} on the
coordinates of the basis vectors. We study lattice vectors of the form
$
\mathbf u^{(\mu,\lambda)}
=
\mu\mathbf c_1+\mathbf c_2+\lambda\mathbf c_3,$
where $\mu$ and $\lambda$ are positive integers. Under
conditions~\textup{(I)}, \textup{(II)}, and~\textup{(III)}, we obtain a
complete characterization of the vectors
$\mathbf u^{(\mu,\lambda)}$ with positive first and fourth coordinates
that are neighbors of the origin: such a vector is a neighbor if and
only if $\mu=1$ and $\lambda\le2$.

As an application, for every integer $m\ge1$ we construct a positive
lattice $L(m)\subseteq\mathbb Z^4$ admitting an admissible basis whose
associated lattice ideal is generic. This yields an infinite family of
generic lattice ideals with exactly seven minimal binomial generators. This family shows that the characterization is sharp, since both
$\mathbf u^{(1,1)}$ and $\mathbf u^{(1,2)}$ occur as neighbors of the
origin.
\end{abstract}

\maketitle

\section{Introduction}

Lattice ideals form a fundamental class of binomial ideals arising in
combinatorics, commutative algebra, and algebraic geometry. They
include, in particular, the defining ideals of affine monomial curves.
If an affine monomial curve is parametrized by
$x_1=t^{n_1},\ldots,x_r=t^{n_r}$, where
$\gcd(n_1,\ldots,n_r)=1$, then its defining ideal is the lattice ideal
associated with the lattice
\[
\ker_{\mathbb Z}(n_1,\ldots,n_r)
=\{(u_1,\ldots,u_r)\in\mathbb Z^r \mid
u_1n_1+\cdots+u_rn_r=0\}.
\]
More generally, we consider lattice ideals
associated with positive lattices, namely lattices
$L\subseteq\mathbb Z^r$ satisfying
$
L\cap\mathbb N^r=\{\mathbf0\}.$ Throughout the paper, $K$ denotes a field.

An important problem in the theory of lattice ideals is the explicit
determination of the minimal free resolution of the quotient ring
$K[x_1,\ldots,x_r]/I_L$,
where $I_L$ denotes the lattice ideal associated with a positive
lattice $L$. For generic lattice ideals this problem is solved by
\cite[Theorem~4.2]{PS}, which shows that the minimal free resolution is
supported on the algebraic Scarf complex. Moreover, the minimal binomial generators of $I_L$ are in bijection with the neighbors of the origin; see Proposition~\ref{cor:neighbors}. Thus, determining the neighbor set provides direct information about the minimal generators
of $I_L$. This motivates the problem of determining the neighbor set
directly from the lattice, without first computing the associated
lattice ideal or a Gr\"obner basis.

Determining the neighbor set directly from a lattice is, however, a
difficult problem. Existing approaches typically proceed by first
computing the associated lattice ideal, for example via a Gr\"obner
basis or a minimal binomial generating set. As a first step, we prove
a coordinatewise characterization of the neighbors of the origin for
arbitrary positive lattices; see Lemma~\ref{lem:neighbor}. Building on
this criterion, we introduce admissible bases for positive lattices of
rank three in $\mathbb Z^4$ and use them to obtain a complete
characterization of the vectors
$
\mu\mathbf c_1+\mathbf c_2+\lambda\mathbf c_3$
having positive first and fourth coordinates that are neighbors of the
origin. We also construct an explicit infinite family of generic
lattice ideals for which the reduced Gr\"obner bases, minimal binomial
generating sets, neighbor sets, Betti numbers, and the $f$-vectors of
the associated algebraic Scarf complexes are determined explicitly.

In the study of hull resolutions of generic affine monomial curves,
Ojeda and Pis\'on Casares~\cite{OP}, building on work of
Barany and Scarf~\cite{BS}, considered lattice vectors of the form
$\mathbf c_1+\mathbf c_2+\lambda\mathbf c_3$, where
$\mathbf c_1,\mathbf c_2,\mathbf c_3$ are basis elements of $L$
satisfying certain sign conditions. Such vectors arise naturally as
neighbors of the origin; see Section~4 of~\cite{OP}. This raises the question of which vectors of the form
$\mathbf c_1+\mathbf c_2+\lambda\mathbf c_3$
can occur as neighbors of the origin and, more generally, which vectors
of the form
$\mathbf u^{(\mu,\lambda)}
=\mu\mathbf c_1+\mathbf c_2+\lambda\mathbf c_3$,
where $\mu$ and $\lambda$ are positive integers, can be neighbors.
The present paper addresses this question by establishing explicit
criteria, in terms of an admissible basis, that restrict when vectors
of this form can occur as neighbors of the origin.

To make this precise, we introduce the notion of an \emph{admissible
basis} $\{\mathbf c_1,\mathbf c_2,\mathbf c_3\}$ of a rank-three
lattice $L\subseteq\mathbb Z^4$: a $\mathbb Z$-basis satisfying three
explicit sign conditions~\textup{(I)}, \textup{(II)}, and
\textup{(III)} on the entries of the basis vectors
(see Definition~\ref{def:setup}).
The three conditions are motivated by Lemma~\ref{lem:neighbor} and are
precisely those needed to apply its coordinatewise criterion in
Section~\ref{sec:lambda}. Conditions~\textup{(I)} and~\textup{(II)}
isolate precisely the coordinate inequalities needed in the proofs of
Section~\ref{sec:lambda}: condition~\textup{(I)} is used to compare
first coordinates, while condition~\textup{(II)} controls the second
coordinates. Condition~\textup{(III)} is satisfied by the explicit
family constructed in Section~\ref{sec:setup} and guarantees the
required positivity properties of the fourth coordinates. Thus an
admissible basis records exactly the lattice-theoretic information
needed for the arguments based on Lemma~\ref{lem:neighbor}, without
computing the associated lattice ideal or a Gröbner basis.

The main structural results concern vectors
$\mathbf u^{(\mu,\lambda)}$
having positive first and fourth coordinates,
where this notion is defined in
Section~\ref{sec:lambda}.
For every positive lattice admitting an admissible basis, we obtain a
complete characterization of the vectors of this form that are
neighbors of the origin: such a vector is a neighbor if and only if
$\mu=1$ and $\lambda\le2$.
Moreover, both values
$\lambda=1$ and $\lambda=2$
occur in the explicit family constructed in
Section~\ref{sec:setup}, showing that the characterization is sharp.

For generic lattice ideals, the neighbors of the origin are in
bijection with the minimal binomial generators.
Thus the above characterization provides an explicit
lattice-theoretic criterion for determining a large class of minimal
generators directly from an admissible basis, without computing the
associated lattice ideal or a Gr\"obner basis.

The second main contribution of the paper is the construction and
analysis of an explicit infinite family of generic lattice ideals
having exactly seven minimal binomial generators.
Although isolated examples of generic lattice ideals with seven minimal
binomial generators were already known, no infinite family of such
ideals appears to have been constructed previously.
In contrast, Ojeda~\cite{Ojeda} proved that for every integer
$m\ge8$ there exists a generic lattice ideal of codimension three
having exactly $m$ minimal binomial generators.
Our construction complements the existence theorem of
Ojeda~\cite{Ojeda} by providing an explicit infinite family of generic
lattice ideals having exactly seven minimal binomial generators.

More precisely, for every integer $m\ge1$ we construct a positive
lattice $L(m)$ admitting an admissible basis such that the associated
lattice ideal $I_{L(m)}$ is generic.
For every member of the family we determine explicitly the reduced
Gr\"obner basis, the unique minimal binomial generating set, the
neighbor set, the Betti numbers of $K[x_1,\ldots,x_4]/I_{L(m)}$,
and the $f$-vector of the associated algebraic Scarf complex.
If $5$ does not divide $m$, then $\mathbb Z^4/L(m)$ is torsion free, so
$I_{L(m)}$ is the defining ideal of an affine monomial curve.
Furthermore, the family shows that the bound $\lambda\le2$ in
Theorem~\ref{thm:mu-lambda} is sharp, since both values
$\lambda=1$ and $\lambda=2$ occur.

The paper is organized as follows.
Section~\ref{sec:preliminaries} introduces the notation and basic
results used throughout the paper. Its principal new result is
Lemma~\ref{lem:neighbor}, which provides a coordinatewise
characterization of the neighbors of the origin for arbitrary positive
lattices and serves as the main tool in the subsequent sections.

Section~\ref{sec:setup} introduces admissible bases and constructs an
explicit infinite family of positive lattices admitting such bases.
Theorem~\ref{thm:groebner} shows that the associated lattice ideals are
generic with exactly seven minimal binomial generators, determines
their reduced Gr\"obner bases and neighbor sets, and proves that the
Gr\"obner basis is the unique minimal binomial generating set.
Proposition~\ref{prop:scarf} determines the Betti numbers and the
$f$-vector of the associated algebraic Scarf complex.

Section~\ref{sec:lambda} develops a lattice-theoretic approach to the
study of neighbors of the origin based on admissible bases. It
culminates in Corollary~\ref{cor:classification}, which gives a
complete characterization of the vectors
$\mathbf u^{(\mu,\lambda)}$ having positive first and fourth
coordinates that are neighbors of the origin: such a vector is a
neighbor of the origin if and only if $\mu=1$ and $\lambda\le2$.

%--------------------------------------------------------------------
\section{Preliminaries}
\label{sec:preliminaries}
%--------------------------------------------------------------------

In this section we recall the notions and results concerning lattice
ideals, Gr\"obner bases, and algebraic Scarf complexes that will be
used throughout the paper. Standard references are
\cite{CLO,MS,PS,Stu}.

Let $L\subseteq\mathbb Z^{r}$ be a nonzero positive lattice, that is,
\[
L\cap\mathbb N^{r}=\{\mathbf 0\}.
\] For a vector
${\bf u}=(u_1,\ldots,u_r)\in\mathbb Z^{r}$,
we denote its $i$-th coordinate by
$({\bf u})_i=u_i$, for every $1\le i\leq r$.
Its \emph{support} is the set
\[
\operatorname{supp}({\bf u})
=
\{\,i\in\{1,\ldots,r\}\mid({\bf u})_i\neq0\,\}.
\]
We say that ${\bf u}$ has \emph{full support} if
\[
\operatorname{supp}({\bf u})=\{1,\ldots,r\}.
\]

Every vector ${\bf u}\in\mathbb Z^{r}$ admits a unique decomposition
\[
{\bf u}={\bf u}^{+}-{\bf u}^{-},
\]
where the $i$-th coordinates of ${\bf u}^{+}$ and ${\bf u}^{-}$ are
given, for $i=1,\ldots,r$, by
\[
({\bf u}^{+})_i=\max\{u_i,0\},\ \textrm{and} \
({\bf u}^{-})_i=\max\{-u_i,0\}.
\]
The vectors ${\bf u}^{+}$ and ${\bf u}^{-}$ are called the
\emph{positive part} and the \emph{negative part} of ${\bf u}$,
respectively. Their coordinates are nonnegative, and they have
disjoint supports.

For vectors ${\bf a},{\bf b}\in\mathbb Z^{r}$, we write
$
{\bf a}\le{\bf b}$
if
$
({\bf a})_i\le({\bf b})_i$ for every $1 \leq i \leq r.$

Let $S=K[x_1,\ldots,x_r]$
be the polynomial ring in $r$ variables over $K$.
For
${\bf a}=(a_1,\ldots,a_r)\in\mathbb N^{r}$,
we write
$
{\bf x}^{\bf a}=x_1^{a_1} \cdots x_r^{a_r}.$ The lattice $L$ determines the ideal
\[
I_L=
\left\langle
{\bf x}^{{\bf u}^{+}}-{\bf x}^{{\bf u}^{-}}
\mid
{\bf u}\in L
\right\rangle
\subseteq S,
\]
called the \emph{lattice ideal} associated with $L$.
Since $L$ is positive, $I_L$ is homogeneous with respect to a positive
grading~\cite{PS}. Each generator
$
{\bf x}^{{\bf u}^{+}}-{\bf x}^{{\bf u}^{-}}
$
is the difference of two monomials, and hence is a binomial.
Therefore $I_L$ is a binomial ideal.
For ${\bf u}\in L$, we call
$
{\bf x}^{{\bf u}^{+}}-{\bf x}^{{\bf u}^{-}}
$
the \emph{binomial associated with} ${\bf u}$.
Its support is the support of ${\bf u}$.
If the quotient group
$\mathbb Z^{r}/L$
is torsion free, then $I_L$ is prime; in this case it is the toric
ideal associated with $L$.

A lattice ideal $I_L$ is called \emph{generic} if it is generated by
binomials of full support. Generic lattice ideals play a distinguished role in the theory of
lattice ideals because, by results of Peeva and
Sturmfels~\cite{PS}, their minimal binomial generators are determined
by the neighbors of the origin, and their minimal free resolutions are
supported on the algebraic Scarf complex.

A central role in the study of generic lattice ideals is played by the
neighbors of the origin. For a finite subset
$J=\{\mathbf u^{(1)},\ldots,\mathbf u^{(m)}\}\subseteq L$,
where
$\mathbf u^{(k)}=(u^{(k)}_1,\ldots,u^{(k)}_r)$ for $1 \leq k \leq m$,
define the componentwise maximum of $J$ by
\[
\max(J)
=
\bigl(
\max_{1\le k\le m}u^{(k)}_1,\,
\max_{1\le k\le m}u^{(k)}_2,\,
\ldots,
\max_{1\le k\le m}u^{(k)}_r
\bigr).
\] Following~\cite[Section~2]{PS},
let $\Delta_L$ be the simplicial complex on the lattice $L$ defined by
\[
\Delta_L
=
\left\{
J\subseteq L:
\max(J)\neq\max(J')
\text{ for every subset }
J'\subseteq L
\text{ with }
J'\neq J
\right\}.
\]
Let
\[
\Delta_L^0
=
\{J\subseteq L\setminus\{\mathbf0\}:J\cup\{\mathbf0\}\in\Delta_L\}.
\]
The vertices of $\Delta_L^0$ are called the
\emph{neighbors of the origin}. We denote the set of all neighbors of
the origin by $N(L)$.

The following lemma provides a coordinatewise criterion for determining whether a lattice vector is a neighbor of the origin. The necessity is implicit in the proof of Proposition~2.2 of \cite{PS}; we include a short proof for completeness and establish the converse.

\begin{lemma}\label{lem:neighbor}
Let $L\subseteq\mathbb Z^r$ be a positive lattice and let
$\mathbf u\in L\setminus\{\mathbf0\}$.
Then $\mathbf u$ is a neighbor of the origin if and only if there is
no $\mathbf b\in L\setminus\{\mathbf0,\mathbf u\}$ such that
$\mathbf b\le\mathbf u^+$.
\end{lemma}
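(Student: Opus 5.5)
The plan is to unwind the definitions. By definition, $\mathbf u\in L\setminus\{\mathbf 0\}$ is a neighbor of the origin exactly when $\{\mathbf u\}\in\Delta_L^0$, i.e. when $\{\mathbf 0,\mathbf u\}\in\Delta_L$. Since $\max(\{\mathbf 0,\mathbf u\})=\mathbf u^+$ coordinatewise, this means: no subset $J'\subseteq L$ with $J'\neq\{\mathbf 0,\mathbf u\}$ satisfies $\max(J')=\mathbf u^+$. Both directions of Lemma~\ref{lem:neighbor} will be proved against this reformulation.

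For necessity, I argue by contraposition. Suppose some $\mathbf b\in L\setminus\{\mathbf 0,\mathbf u\}$ satisfies $\mathbf b\le\mathbf u^+$. Take $J'=\{\mathbf 0,\mathbf u,\mathbf b\}$. Then $\max(J')=\max(\mathbf u^+,\mathbf b)=\mathbf u^+$, and $J'\neq\{\mathbf 0,\mathbf u\}$ because $\mathbf b\notin\{\mathbf 0,\mathbf u\}$. Hence $\{\mathbf 0,\mathbf u\}\notin\Delta_L$, so $\mathbf u$ is not a neighbor.

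For sufficiency, assume no such $\mathbf b$ exists, and let $J'\subseteq L$ be a (nonempty) subset with $\max(J')=\mathbf u^+$. Every $\mathbf w\in J'$ satisfies $\mathbf w\le\max(J')=\mathbf u^+$. By hypothesis this forces $\mathbf w\in\{\mathbf 0,\mathbf u\}$, so $J'\subseteq\{\mathbf 0,\mathbf u\}$. It remains to exclude the two singleton cases, and this is the only place positivity of $L$ enters.

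\begin{itemize}
\item If $J'=\{\mathbf 0\}$, then $\mathbf u^+=\mathbf 0$, so $\mathbf u\le\mathbf 0$. Then $-\mathbf u\in L\cap\mathbb N^r$ is nonzero, contradicting $L\cap\mathbb N^r=\{\mathbf 0\}$.
\item If $J'=\{\mathbf u\}$, then $\mathbf u=\mathbf u^+\ge\mathbf 0$. Then $\mathbf u\in L\cap\mathbb N^r\setminus\{\mathbf 0\}$, again contradicting positivity.
\end{itemize}

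Hence $J'=\{\mathbf 0,\mathbf u\}$, so $\{\mathbf 0,\mathbf u\}\in\Delta_L$ and $\mathbf u\in N(L)$.

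There is no serious obstacle; the argument is purely definitional. The points needing care are the identity $\max(\{\mathbf 0,\mathbf u\})=\mathbf u^+$ and the use of positivity to rule out the singleton subsets $J'$.
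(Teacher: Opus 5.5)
Your proof is correct and follows the paper's argument in both directions: for necessity you add $\mathbf b$ to $\{\mathbf 0,\mathbf u\}$ without changing the maximum $\mathbf u^+$, and for sufficiency you observe that every element of a competing set $J'$ is bounded by $\mathbf u^+$. Your explicit use of positivity to exclude $J'=\{\mathbf 0\}$ and $J'=\{\mathbf u\}$ makes precise a step the paper passes over, where it asserts that $J\neq\{\mathbf 0,\mathbf u\}$ forces $J$ to contain an element outside $\{\mathbf 0,\mathbf u\}$; the paper records $\mathbf u\neq\mathbf u^+$ beforehand but never invokes it at that point.
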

\begin{proof}
Suppose that $\mathbf u$ is a neighbor of the origin. Then
$\{\mathbf u\}\in\Delta_L^0$, and therefore
$\{\mathbf0,\mathbf u\}\in\Delta_L$. Assume that there exists
$\mathbf b\in L\setminus\{\mathbf0,\mathbf u\}$
such that
$\mathbf b\le\mathbf u^+$.
For each $1 \leq i \leq r$, we have
$
(\mathbf0)_i=0\le(\mathbf u^+)_i,$ 
$(\mathbf u)_i\le(\mathbf u^+)_i,$ and
$(\mathbf b)_i\le(\mathbf u^+)_i.$
Since
$(\mathbf u^+)_i=\max\{0,(\mathbf u)_i\}$, either $(\mathbf0)_i$ or $(\mathbf u)_i$ is equal to
$(\mathbf u^+)_i$. Hence $
\max\{(\mathbf0)_i,(\mathbf u)_i,(\mathbf b)_i\}
=
(\mathbf u^+)_i.$
Thus
\[
\max(\{\mathbf0,\mathbf u,\mathbf b\})
=
\mathbf u^+
=
\max(\{\mathbf0,\mathbf u\}).
\]
Since
$\{\mathbf0,\mathbf u,\mathbf b\}$ and
$\{\mathbf0,\mathbf u\}$
are distinct subsets of $L$ with the same componentwise maximum, this
contradicts the fact that
$\{\mathbf0,\mathbf u\}\in\Delta_L$.
Therefore no such vector $\mathbf b$ exists.

Conversely, suppose that there is no
$\mathbf b\in L\setminus\{\mathbf0,\mathbf u\}$
satisfying
$\mathbf b\le\mathbf u^+$.
Since $L$ is positive and $\mathbf u\neq\mathbf0$, neither
$\mathbf u$ nor $-\mathbf u$ belongs to $\mathbb N^r$.
Hence
$\mathbf u\neq\mathbf u^+$.
Suppose that
$\{\mathbf0,\mathbf u\}\notin\Delta_L$.
Since
$
\max(\{\mathbf0,\mathbf u\})
=
\mathbf u^+,$
there exists a finite subset
$J\subseteq L$
such that
$J\neq\{\mathbf0,\mathbf u\}$
and
$\max(J)=\mathbf u^+$. Thus every element of $J$ is bounded coordinatewise by
$\mathbf u^+$; that is,
$\mathbf c\le\mathbf u^+$ for every $\mathbf c\in J$.
Since
$J\neq\{\mathbf0,\mathbf u\}$,
there exists
$\mathbf b\in L\setminus\{\mathbf0,\mathbf u\}$
such that
$\mathbf b\in J$.
Therefore,
$\mathbf b\le\mathbf u^+$,
contradicting the hypothesis.
Hence
$\{\mathbf0,\mathbf u\}\in\Delta_L$.
Equivalently,
$\{\mathbf u\}\in\Delta_L^0$,
so $\mathbf u$ is a neighbor of the origin.
\end{proof}
The following proposition gives the correspondence between the
neighbors of the origin and the minimal binomial generators of a
generic lattice ideal.

\begin{proposition} \label{cor:neighbors}
Let $I_L$ be a generic lattice ideal. A lattice vector
${\bf u}\in L$ is a neighbor of the origin if and only if
$
{\bf x}^{{\bf u}^{+}}-{\bf x}^{{\bf u}^{-}}$
is a minimal binomial generator of $I_L$.
\end{proposition}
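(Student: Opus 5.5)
The plan is to derive the statement from the Peeva--Sturmfels description of the minimal free resolution of a generic lattice ideal, together with Lemma~\ref{lem:neighbor}. We do not build the resolution ourselves. Throughout, write $f_{\mathbf u}=\mathbf x^{\mathbf u^+}-\mathbf x^{\mathbf u^-}$ for $\mathbf u\in L$, and note that $f_{-\mathbf u}=-f_{\mathbf u}$. By Lemma~\ref{lem:neighbor}, $\mathbf u\in N(L)$ if and only if $-\mathbf u\in N(L)$, since $\mathbf b\le\mathbf u^+$ is equivalent to $\mathbf b-\mathbf u\le\mathbf u^-=(-\mathbf u)^+$. So both sides of the equivalence are invariant under $\mathbf u\mapsto-\mathbf u$, and it is enough to match the orbits $\{\pm\mathbf u\}$ with binomials up to sign.

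\emph{Step 1 (reading off the generators from \cite[Theorem~4.2]{PS}).} In the algebraic Scarf complex, the free module in homological degree one has a basis indexed by the $L$-orbits of edges $\{\mathbf a,\mathbf a+\mathbf u\}\in\Delta_L$. Translating by $-\mathbf a$, each orbit contains an edge of the form $\{\mathbf0,\mathbf u\}$, and $\mathbf u$ is determined up to sign. These orbits are therefore exactly the pairs $\{\pm\mathbf u\}$ with $\mathbf u\in N(L)$. The differential sends the basis element of the edge $\{\mathbf0,\mathbf u\}$, which has degree $\max(\mathbf0,\mathbf u)=\mathbf u^+$, to $\mathbf x^{\mathbf u^+}-\mathbf x^{\mathbf u^+-\mathbf u}=f_{\mathbf u}$. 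Since the resolution is minimal for generic $I_L$, the set $\{f_{\mathbf u}:\mathbf u\in N(L)\}$, taken up to sign, is a minimal generating set of $I_L$. In particular, every neighbor gives a minimal binomial generator, which proves the forward implication.

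\emph{Step 2 (converse).} Let $f_{\mathbf u}$ be a minimal binomial generator, meaning it belongs to some minimal binomial generating set, and suppose for contradiction that $\mathbf u\notin N(L)$. By Lemma~\ref{lem:neighbor} there is $\mathbf b\in L\setminus\{\mathbf0,\mathbf u\}$ with $\mathbf b\le\mathbf u^+$. Using $\mathbf u^+-\mathbf b=(\mathbf u^+-\mathbf b^+)+\mathbf b^-\ge\mathbf0$, I would write
\[
f_{\mathbf u}=\mathbf x^{\mathbf u^+-\mathbf b^+}f_{\mathbf b}+\bigl(\mathbf x^{\mathbf u^+-\mathbf b}-\mathbf x^{\mathbf u^-}\bigr).
\]
The second summand is a monomial multiple of $f_{\mathbf u-\mathbf b}$. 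The aim is to show that both $f_{\mathbf b}$ and $f_{\mathbf u-\mathbf b}$ have degree strictly below $\deg f_{\mathbf u}$ in the positive grading, or that a monomial factor is nontrivial. Either conclusion places $f_{\mathbf u}$ in $\mathfrak m I_L$ plus the ideal generated by binomials of smaller degree, so it cannot be a minimal generator.

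\emph{Step 3 (main obstacle).} The delicate case is when one of these monomial cofactors is $1$, for example $\mathbf b^+=\mathbf u^+$. In that case the rewriting does not lower the degree. Here genericity has to be used: by Step 1, $I_L$ is generated by the full-support binomials $f_{\mathbf v}$ with $\mathbf v\in N(L)$. My first attempt would be to show that in each degree $\mathbf u^+$ there is at most one such generator up to sign. The argument would be that two distinct full-support binomials sharing the monomial $\mathbf x^{\mathbf u^+}$ differ by $\mathbf x^{\mathbf v^-}-\mathbf x^{\mathbf w^-}$. This difference lies in $I_L$ and comes from the lattice vector $\mathbf v-\mathbf w$, and bounding it against $\mathbf u^+$ contradicts Lemma~\ref{lem:neighbor} for $\mathbf v$. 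Once this holds, the minimal generating set of Step 1 is unique up to scalars in each multidegree. Hence any minimal binomial generator $f_{\mathbf u}$ equals $\pm f_{\mathbf v}$ for some $\mathbf v\in N(L)$, which forces $\mathbf u=\pm\mathbf v\in N(L)$.
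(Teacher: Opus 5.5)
Your Step~1 is sound and proves the forward implication. It takes a different route from the paper, which simply combines Sabzrou's theorem (for generic $I_L$, neighbors correspond to indispensable binomials) with the uniqueness of the minimal binomial generating set in Peeva--Sturmfels, Remark~4.4(3).

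The converse, however, is not established. Step~2 only disposes of the case where both monomial cofactors are nontrivial. Step~3, which is meant to cover the rest, has two problems.

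First, you only compare neighbor binomials that share the monomial $\mathbf x^{\mathbf u^+}$. Two binomials of the same $\mathbb Z^r/L$-degree need not share any monomial, so this does not show that a degree carries at most one neighbor binomial.

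Second, and more seriously, even granting $\dim_K(I_L/\mathfrak m I_L)_d\le 1$ for every $d$, it does not follow that every minimal binomial generator equals some $\pm f_{\mathbf v}$. A count of generators determines them only modulo $(\mathfrak m I_L)_d$, and any binomial $cf_{\mathbf v}+h$ with $c\neq0$ and $0\ne h\in(\mathfrak m I_L)_d$ is again a minimal generator. For example, for the toric ideal $\langle x-y,\,xz-w\rangle$, the $\mathbb Z^4/L$-degree of $w$ carries exactly one minimal generator. Yet both $xz-w$ and $yz-w=(xz-w)-z(x-y)$ are minimal binomial generators. What is needed is that $(I_L)_d$ itself equals $Kf_{\mathbf v}$, i.e.\ that $f_{\mathbf v}$ is indispensable.

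This follows directly from Lemma~\ref{lem:neighbor}. Let $\mathbf v\in N(L)$ and let $\mathbf c\in\mathbb N^r$ satisfy $\mathbf c-\mathbf v^+\in L$. Then $\mathbf b=\mathbf v^+-\mathbf c\in L$ and $\mathbf b\le\mathbf v^+$, so $\mathbf b\in\{\mathbf 0,\mathbf v\}$, that is, $\mathbf c\in\{\mathbf v^+,\mathbf v^-\}$. Hence the only monomials of degree $d=\mathbf v^++L$ are $\mathbf x^{\mathbf v^+}$ and $\mathbf x^{\mathbf v^-}$. These have disjoint supports and $I_L$ contains no monomials, so $(I_L)_d=Kf_{\mathbf v}$ and $(\mathfrak m I_L)_d=0$. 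Therefore every binomial generating set contains $\pm f_{\mathbf v}$.

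By Step~1 these binomials already generate $I_L$, so every minimal binomial generating set consists exactly of them up to sign. Thus $f_{\mathbf u}=\pm f_{\mathbf v}$, which forces $\mathbf u=\pm\mathbf v\in N(L)$. This makes Step~2 unnecessary and gives a self-contained substitute for the two results the paper cites. Genericity is then used only through the Scarf-complex theorem in Step~1.
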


\begin{proof}
By~\cite[Theorem~2.2]{Sabzrou}, for a generic lattice ideal $I_L$,
${\bf u}\in L$ is a neighbor of the origin if and only if
$
B={\bf x}^{{\bf u}^{+}}-{\bf x}^{{\bf u}^{-}}$
is an indispensable binomial of $I_L$, that is, either $B$ or $-B$
belongs to every binomial generating set of $I_L$.
By~\cite[Remark~4.4(3)]{PS}, a generic lattice ideal has a unique
minimal binomial generating set. Hence the minimal binomial generators
coincide with the indispensable binomials. Therefore
$
{\bf x}^{{\bf u}^{+}}-{\bf x}^{{\bf u}^{-}}$ is a minimal binomial generator of $I_L$ if and only if
${\bf u}$ is a neighbor of the origin.
\end{proof}
We shall also use standard facts from the theory of Gr\"obner bases.
Fix a monomial order $\prec$ on $S$. For a polynomial $f\in S$, we
denote by $\operatorname{lm}(f)$ and $\operatorname{lc}(f)$ its
leading monomial and leading coefficient, respectively, with respect
to $\prec$. Our computations rely on Buchberger's criterion together
with the coprimeness criterion.

\begin{theorem}[Buchberger's Criterion {\cite[Chapter~2, \S6, Theorem~6]{CLO}}]
\label{thm:buchberger}
A finite subset $G=\{g_1,\ldots,g_t\}\subseteq I$
is a Gr\"obner basis of an ideal $I\subseteq S$ if and only if every
$S$-polynomial $S(g_i,g_j)$ reduces to zero modulo $G$.
\end{theorem}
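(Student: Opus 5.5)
The plan is the classical argument: prove the trivial direction by the division algorithm, and the converse by a minimal-counterexample argument on representations $f=\sum h_ig_i$, using well-ordering of $\prec$.

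\emph{Necessity.} If $G$ is a Gr\"obner basis, each $S(g_i,g_j)$ lies in $I$. Dividing an element of $I$ by a Gr\"obner basis leaves remainder zero, since otherwise the remainder would be a nonzero element of $I$ with no term divisible by any $\operatorname{lm}(g_k)$. Hence every $S$-polynomial reduces to zero.

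\emph{Sufficiency.} Assume every $S(g_i,g_j)$ reduces to zero modulo $G$. Reduction to zero gives a standard representation
\[
S(g_i,g_j)=\sum_k a_{ijk}g_k
\quad\text{with}\quad
\operatorname{lm}(a_{ijk}g_k)\preceq\operatorname{lm}(S(g_i,g_j))\prec\operatorname{lcm}(\operatorname{lm}g_i,\operatorname{lm}g_j).
\]
Take a nonzero $f\in I$ and write $f=\sum_i h_ig_i$. Among all such representations, choose one minimizing
\[
\delta=\max_i\operatorname{lm}(h_ig_i).
\]
Such a minimum exists because $\prec$ is a well-order. We always have $\operatorname{lm}(f)\preceq\delta$. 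The goal is to show equality, for then $\operatorname{lm}(f)=\operatorname{lm}(h_ig_i)$ for some $i$ and is divisible by $\operatorname{lm}(g_i)$. This gives $\langle\operatorname{lm}(I)\rangle=\langle\operatorname{lm}(g_1),\ldots,\operatorname{lm}(g_t)\rangle$, so $G$ is a Gr\"obner basis.

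\emph{The minimality step.} Suppose $\operatorname{lm}(f)\prec\delta$. Split the sum into the part where $\operatorname{lm}(h_ig_i)=\delta$ and the rest. In the first part, replace each $h_i$ by its leading term $c_i\mathbf x^{\alpha_i}$. The leading coefficients of these products $c_i\mathbf x^{\alpha_i}g_i$ must cancel, since $\operatorname{lm}(f)\prec\delta$.

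\emph{Main obstacle: the cancellation lemma.} I need the following fact. If $p_i=\mathbf x^{\alpha_i}g_i$ all have leading monomial $\delta$ and $\sum c_ip_i$ has leading monomial $\prec\delta$, then $\sum c_ip_i$ is a $K$-linear combination of the polynomials $\mathbf x^{\delta-\gamma_{ij}}S(g_i,g_j)$, where $\gamma_{ij}=\operatorname{lcm}(\operatorname{lm}g_i,\operatorname{lm}g_j)$. To prove it, normalize each $p_i$ to be monic and telescope:
\[
\sum c_ip_i=\sum_{i<t}(c_1+\cdots+c_i)(p_i-p_{i+1})+(c_1+\cdots+c_t)p_t ,
\]
where the final coefficient vanishes by the cancellation. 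Each difference $p_i-p_{i+1}$ is then a monomial multiple of $S(g_i,g_{i+1})$.

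\emph{Finishing.} Substitute the standard representations of the $S$-polynomials into this combination. Each resulting term has leading monomial at most $\mathbf x^{\delta-\gamma_{ij}}\operatorname{lm}(S(g_i,g_j))\prec\delta$. The non-leading parts of the $h_i$ also contribute only terms with leading monomial $\prec\delta$. This yields a new representation of $f$ with maximum strictly below $\delta$, contradicting minimality. Hence $\operatorname{lm}(f)=\delta$, completing the proof.
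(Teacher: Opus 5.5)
Your argument is correct and is exactly the standard proof in the source the paper cites (Cox--Little--O'Shea, Chapter~2, \S6). That proof has two parts: the telescoping cancellation lemma expressing $\sum c_i\mathbf x^{\alpha_i}g_i$ as a combination of the $\mathbf x^{\delta-\gamma_{ij}}S(g_i,g_j)$, followed by the minimal-$\delta$ representation argument; the paper gives no proof of its own and only quotes the criterion from that reference.
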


\begin{proposition}[{\cite[Chapter~2, Proposition~4]{CLO}}]
\label{prop:coprime}
Let $f,g\in S$. If the leading monomials of $f$ and $g$ are relatively
prime, then the $S$-polynomial $S(f,g)$ reduces to zero modulo
$\{f,g\}$.
\end{proposition}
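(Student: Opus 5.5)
We would prove the statement by a direct computation. We first normalize the leading coefficients and write each polynomial as its leading monomial plus a tail. Then we exhibit an explicit representation of $S(f,g)$ in terms of $f$ and $g$ in which no leading-term cancellation occurs. Throughout, ``reduces to zero modulo $\{f,g\}$'' is taken in the sense of \cite{CLO}: $S(f,g)=a f+b g$ with $\operatorname{lm}(af)\preceq\operatorname{lm}(S(f,g))$ and $\operatorname{lm}(bg)\preceq\operatorname{lm}(S(f,g))$ whenever the corresponding summand is nonzero.

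\emph{Normalization and the key identity.} Since $S(f,g)$ changes only by a nonzero scalar when $f$ and $g$ are rescaled, we may assume $\operatorname{lc}(f)=\operatorname{lc}(g)=1$. Put $M=\operatorname{lm}(f)$ and $N=\operatorname{lm}(g)$, and write $f=M+p$ and $g=N+q$. Every monomial of $p$ is strictly smaller than $M$, and every monomial of $q$ is strictly smaller than $N$. Because $\gcd(M,N)=1$, the least common multiple of $M$ and $N$ is $MN$, so
\[
S(f,g)=Nf-Mg=Np-Mq .
\]
Substituting $N=g-q$ and $M=f-p$ gives
\[
S(f,g)=(g-q)p-(f-p)q=p\,g-q\,f .
\]
This is our candidate representation, with $a=-q$ and $b=p$.

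\emph{No cancellation of leading terms.} If $p=0$ or $q=0$, then $S(f,g)$ is a single monomial multiple of $g$ or of $f$, and the claim is immediate. Otherwise $\operatorname{lm}(pg)=\operatorname{lm}(p)N$ and $\operatorname{lm}(qf)=\operatorname{lm}(q)M$. Suppose these two monomials were equal, so that $\operatorname{lm}(p)N=\operatorname{lm}(q)M$. Since $M$ and $N$ are coprime, $M$ would divide $\operatorname{lm}(p)$. Then $\operatorname{lm}(p)\succeq M$, because a monomial order is a well-order compatible with multiplication. This contradicts $\operatorname{lm}(p)\prec M$. Hence the two leading monomials are distinct, so they cannot cancel in $pg-qf$. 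It follows that $\operatorname{lm}(S(f,g))$ is the larger of $\operatorname{lm}(pg)$ and $\operatorname{lm}(qf)$. This gives the required bounds on both summands, so $S(f,g)$ reduces to zero modulo $\{f,g\}$.

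\emph{Main obstacle.} The only genuinely delicate step is the no-cancellation argument. It is the one place where coprimality is used a second time, beyond computing the least common multiple. It also requires the observation that divisibility forces comparability in a monomial order.
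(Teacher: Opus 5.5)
Your proof is correct and is the standard argument from Cox--Little--O'Shea, the source the paper cites instead of giving a proof of its own. As there, you normalize, write $f=M+p$ and $g=N+q$ to get $S(f,g)=pg-qf$, and use $\gcd(M,N)=1$ with $\operatorname{lm}(p)\prec M$ to rule out cancellation between $\operatorname{lm}(pg)$ and $\operatorname{lm}(qf)$. One small imprecision: when $p=0$ you get $S(f,g)=-qf$, a polynomial rather than monomial multiple of $f$, but that one-term representation is still trivially standard, so the argument stands.
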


%--------------------------------------------------------------------
\section{Admissible Lattices and the Parametric Family}
\label{sec:setup}
%--------------------------------------------------------------------
In this section we first introduce the notion of an admissible basis
for a rank-three lattice in $\mathbb Z^4$. The defining sign
conditions ensure that the lattice vectors introduced below have full
support and satisfy the coordinate properties needed in the subsequent
analysis of neighbors of the origin.

\begin{definition}\label{def:setup} A $\mathbb Z$-basis
$\{\mathbf c_1,\mathbf c_2,\mathbf c_3\}$ of a rank-three lattice
$L\subseteq\mathbb Z^4$ is called \emph{admissible} if $\mathbf c_1=(\alpha_{11},-\alpha_{12},-\alpha_{13},-\alpha_{14})$,
$\mathbf c_2=(-\alpha_{21},-\alpha_{22},-\alpha_{23},\alpha_{24})$, and
$\mathbf c_3=(-\alpha_{31},\alpha_{32},-\alpha_{33},-\alpha_{34})$,
where every $\alpha_{ij}$ is a positive integer, and the following conditions hold:
\begin{itemize}
\item[\textbf{(I)}]
$\alpha_{11}>\alpha_{21}+2\alpha_{31}$,
\item[\textbf{(II)}]
$\max(\alpha_{12},\alpha_{22})<\alpha_{32}
<\alpha_{12}+\alpha_{22}$,
\item[\textbf{(III)}]
$\alpha_{24}>\alpha_{14}+2\alpha_{34}$.
\end{itemize}

\end{definition}

The inequalities in Definition~\ref{def:setup} are chosen so that the
coordinatewise criterion of Lemma~\ref{lem:neighbor} can be applied
effectively to the vectors studied in
Section~\ref{sec:lambda}.

Throughout the remainder of the paper, given an admissible basis, we
write $\mathbf u_i=\mathbf c_i$ for $i=1,2,3$, and define
\[
\mathbf u_4=\mathbf c_1+\mathbf c_3,\;
\mathbf u_5=\mathbf c_2+\mathbf c_3,\;
\mathbf u_6=\mathbf c_1+\mathbf c_2+\mathbf c_3,\;
\mathbf u_7=\mathbf c_1+\mathbf c_2+2\mathbf c_3.
\]

The following proposition establishes a basic consequence of the
defining properties of an admissible basis that will be used
repeatedly throughout the paper.

\begin{proposition}\label{prop:full-support} Under conditions
\textup{(I)}, \textup{(II)}, and \textup{(III)}, every vector $\mathbf u_i$, $1\le i\le7$, has full support. Moreover, $(\mathbf u_i^+)_3=0$ and $(\mathbf u_i^-)_3>0$ for every $1 \leq i \leq 7$.
\end{proposition}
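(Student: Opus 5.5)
The proof is a direct coordinate computation. I will write out all seven vectors explicitly in terms of the $\alpha_{ij}$ and check the sign of each coordinate. The vectors $\mathbf u_1,\mathbf u_2,\mathbf u_3$ are the basis vectors themselves. Every $\alpha_{ij}$ is a positive integer, so each coordinate of $\mathbf c_i$ is nonzero and full support is immediate. The third coordinates of the $\mathbf c_i$ are $-\alpha_{13},-\alpha_{23},-\alpha_{33}$, all negative.

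For the combinations I compute coordinatewise:
\[
\mathbf u_4=(\alpha_{11}-\alpha_{31},\;\alpha_{32}-\alpha_{12},\;-\alpha_{13}-\alpha_{33},\;-\alpha_{14}-\alpha_{34}),
\]
\[
\mathbf u_5=(-\alpha_{21}-\alpha_{31},\;\alpha_{32}-\alpha_{22},\;-\alpha_{23}-\alpha_{33},\;\alpha_{24}-\alpha_{34}),
\]
\[
\mathbf u_6=(\alpha_{11}-\alpha_{21}-\alpha_{31},\;\alpha_{32}-\alpha_{12}-\alpha_{22},\;-\alpha_{13}-\alpha_{23}-\alpha_{33},\;\alpha_{24}-\alpha_{14}-\alpha_{34}),
\]
\[
\mathbf u_7=(\alpha_{11}-\alpha_{21}-2\alpha_{31},\;2\alpha_{32}-\alpha_{12}-\alpha_{22},\;-\alpha_{13}-\alpha_{23}-2\alpha_{33},\;\alpha_{24}-\alpha_{14}-2\alpha_{34}).
\]

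\textbf{First coordinates.} Condition (I) gives $\alpha_{11}-\alpha_{21}-2\alpha_{31}>0$, so the first coordinate of $\mathbf u_7$ is positive. Since the $\alpha_{ij}$ are positive, the first coordinates of $\mathbf u_6$ and $\mathbf u_4$ are even larger, hence also positive. The first coordinate of $\mathbf u_5$ is clearly negative.

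\textbf{Fourth coordinates.} Condition (III) gives $\alpha_{24}-\alpha_{14}-2\alpha_{34}>0$, so the fourth coordinate of $\mathbf u_7$ is positive, and the same monotonicity argument shows the fourth coordinates of $\mathbf u_6$ and $\mathbf u_5$ are positive. The fourth coordinate of $\mathbf u_4$ is clearly negative.

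\textbf{Second coordinates.} This is the only place that needs care, and it uses both halves of (II). The left inequality $\max(\alpha_{12},\alpha_{22})<\alpha_{32}$ makes the second coordinates of $\mathbf u_4$ and $\mathbf u_5$ positive. The right inequality $\alpha_{32}<\alpha_{12}+\alpha_{22}$ makes the second coordinate of $\mathbf u_6$ negative. For $\mathbf u_7$ I write its second coordinate as $(\alpha_{32}-\alpha_{12})+(\alpha_{32}-\alpha_{22})$; both summands are positive by the left inequality, so it is positive. In every case the second coordinate is nonzero.

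\textbf{Third coordinates.} Each third coordinate is a negative combination of the positive integers $\alpha_{13},\alpha_{23},\alpha_{33}$, so it is strictly negative. By the definitions of $\mathbf u^{+}$ and $\mathbf u^{-}$, this gives $(\mathbf u_i^+)_3=0$ and $(\mathbf u_i^-)_3>0$ for every $1\le i\le 7$.

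Combining the four coordinate checks, all seven vectors have full support. No genuine obstacle arises. The only subtle step is the second coordinate, where $\mathbf u_6$ and $\mathbf u_7$ require opposite halves of (II), and I will record those signs explicitly for later use.
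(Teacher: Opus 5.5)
Your proposal is correct and follows the paper's proof essentially verbatim. Both are a coordinate-by-coordinate sign check of the seven vectors, using (I) for the first coordinates, both halves of (II) for the second, positivity of the $\alpha_{ij}$ for the third, and (III) for the fourth; your monotonicity step (the first coordinates of $\mathbf u_4$ and $\mathbf u_6$ exceed that of $\mathbf u_7$, which (I) makes positive) is slightly more explicit than the paper, which just calls those quantities clearly nonzero.
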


\begin{proof}
The third coordinates of the vectors
$\mathbf u_1,\ldots,\mathbf u_7$ are
\[
-\alpha_{13},\,
-\alpha_{23},\,
-\alpha_{33},\,
-(\alpha_{13}+\alpha_{33}),\,
-(\alpha_{23}+\alpha_{33}),\,
-(\alpha_{13}+\alpha_{23}+\alpha_{33}),\,
-(\alpha_{13}+\alpha_{23}+2\alpha_{33}).
\]
Since every $\alpha_{ij}$ is positive, each of these numbers is
negative. Hence
$
(\mathbf u_i^+)_3=0$ and $(\mathbf u_i^-)_3>0$ for every $1\le i\le7$.

It remains to verify that every $\mathbf u_i$ has full support.
The first coordinates of the vectors
$\mathbf u_1,\ldots,\mathbf u_7$ are
\[
\alpha_{11},\,
-\alpha_{21},\,
-\alpha_{31},\,
\alpha_{11}-\alpha_{31},\,
-(\alpha_{21}+\alpha_{31}),\,
\alpha_{11}-\alpha_{21}-\alpha_{31},\,
\alpha_{11}-\alpha_{21}-2\alpha_{31}.
\]
Since
$
\alpha_{11}>\alpha_{21}+2\alpha_{31}>0$
by condition~\textup{(I)}, the last expression is positive, while the
remaining quantities are clearly nonzero. Hence every first coordinate
is nonzero.

The second coordinates of the vectors
$\mathbf u_1,\ldots,\mathbf u_7$ are
\[
-\alpha_{12},\,
-\alpha_{22},\,
\alpha_{32},\,
\alpha_{32}-\alpha_{12},\,
\alpha_{32}-\alpha_{22},\,
\alpha_{32}-\alpha_{12}-\alpha_{22},\,
2\alpha_{32}-\alpha_{12}-\alpha_{22}.
\]
Condition~\textup{(II)} implies that
$\alpha_{32}-\alpha_{12}>0$, $\alpha_{32}-\alpha_{22}>0$, and 
$\alpha_{32}-\alpha_{12}-\alpha_{22}<0.$
Moreover,
\[
2\alpha_{32}-\alpha_{12}-\alpha_{22}
=
(\alpha_{32}-\alpha_{12})
+
(\alpha_{32}-\alpha_{22})
>
0.
\]
Hence each of these quantities is nonzero.

Finally, the fourth coordinates are
\[
-\alpha_{14},\,
\alpha_{24},\,
-\alpha_{34},\,
-(\alpha_{14}+\alpha_{34}),\,
\alpha_{24}-\alpha_{34},\,
\alpha_{24}-\alpha_{14}-\alpha_{34},\,
\alpha_{24}-\alpha_{14}-2\alpha_{34}.
\]
Since $ \alpha_{24}>\alpha_{14}+2\alpha_{34}$ by condition ~\textup{(III)}, the last three quantities are positive, while the first four are clearly nonzero. Hence every fourth coordinate is nonzero. Therefore each $\mathbf u_i$
has full support.
\end{proof}

Next we introduce an explicit parametric family of admissible lattices that will serve as the main source of examples throughout the remainder of the paper.

\begin{definition}\label{def:family}
For each integer $m\ge1$, let $\mathbf c_1(m)=(m+4,-(m+1),-1,-1)$,
$\mathbf c_2(m)=(-(m+1),-3,-1,m+4)$, and
$\mathbf c_3(m)=(-1,m+3,-(m+1),-1)$. Set
\[
L(m)=
\mathbb Z\mathbf c_1(m)+
\mathbb Z\mathbf c_2(m)+
\mathbb Z\mathbf c_3(m).
\]
\end{definition}

The following proposition establishes the basic properties of this
family.

\begin{proposition}\label{prop:family}
For every integer $m\ge1$, let
\[
\mathbf n(m)=
\bigl(n_1(m),n_2(m),n_3(m),n_4(m)\bigr),
\]
where
\[
\begin{aligned}
n_1(m)&=m^3+7m^2+21m+20, &
n_2(m)&=m^3+8m^2+25m+25,\\
n_3(m)&=m^3+8m^2+26m+25, &
n_4(m)&=m^3+8m^2+28m+30.
\end{aligned}
\]
Then the following hold.

\begin{enumerate}
\item[\textup{(i)}]
\[
\gcd(n_1(m),n_2(m),n_3(m),n_4(m))
=
\begin{cases}
5,&\text{if $5$ divides $m$,}\\
1,&\text{otherwise.}
\end{cases}
\]

\item[\textup{(ii)}]
The lattice $L(m)$ has admissible basis
$\{\mathbf c_1(m),\mathbf c_2(m),\mathbf c_3(m)\}$.

\item[\textup{(iii)}]
If $5$ does not divide $m$, then $
L(m)={\rm ker}_{\mathbb Z}(n_{1}(m),n_{2}(m),n_{3}(m),n_{4}(m)).$
\end{enumerate}
\end{proposition}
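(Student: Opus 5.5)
The statement splits into three independent parts, and I would prove them in the order (ii), (i), (iii).

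For (ii), I read off the $\alpha_{ij}$ from Definition~\ref{def:family}:
\[
\alpha_{11}=m+4,\ \alpha_{12}=m+1,\ \alpha_{13}=\alpha_{14}=1,\qquad \alpha_{21}=m+1,\ \alpha_{22}=3,\ \alpha_{23}=1,\ \alpha_{24}=m+4,
\]
\[
\alpha_{31}=1,\ \alpha_{32}=m+3,\ \alpha_{33}=m+1,\ \alpha_{34}=1.
\]
All of these are positive, and the sign patterns match Definition~\ref{def:setup}.
\begin{itemize}
\item Condition (I) becomes $m+4>(m+1)+2$.
\item Condition (II) becomes $\max(m+1,3)<m+3<m+4$, which holds because $m\ge1$.
\item Condition (III) becomes $m+4>1+2$.
\end{itemize}
By definition $L(m)$ is spanned by $\mathbf c_1(m),\mathbf c_2(m),\mathbf c_3(m)$. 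So it remains only to check that these vectors are linearly independent. This will follow from (iii)'s minor computation, since at least one $3\times3$ minor equals $\pm n_i(m)\neq0$.

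For (i), I use differences to reduce the gcd:
\[
n_3-n_2=m,\qquad n_4-n_3=2m+5.
\]
Hence any common divisor divides $m$ and $2m+5$, and therefore divides $5$.
\begin{itemize}
\item If $5\nmid m$, then $\gcd(m,5)=1$, so the gcd is $1$.
\item If $5\mid m$, reducing modulo $5$ gives $n_1\equiv 20$, $n_2\equiv n_3\equiv 25$ and $n_4\equiv 30$, all congruent to $0$. So the gcd is exactly $5$.
\end{itemize}

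For (iii), I first verify directly that $\mathbf c_i(m)\cdot\mathbf n(m)=0$ for $i=1,2,3$. This is a polynomial identity in $m$, checked by expanding; for instance, for $\mathbf c_1$ one expands $(m+4)n_1-(m+1)n_2-n_3-n_4$. This gives $L(m)\subseteq\ker_{\mathbb Z}(\mathbf n(m))$.

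To obtain equality, I use the standard fact that a rank-$3$ sublattice of the rank-$3$ lattice $\ker_{\mathbb Z}(\mathbf n)$, with $\gcd(\mathbf n)=1$, equals it exactly when its $3\times3$ minors have gcd $1$. Equivalently, $\mathbb Z^4/L(m)$ is torsion free. I will compute the four maximal minors of the $3\times4$ matrix with rows $\mathbf c_i(m)$, i.e.\ the generalized cross product of the three rows. This vector is orthogonal to each $\mathbf c_i(m)$, so it is proportional to $\mathbf n(m)$. I expect the minors to be exactly $\pm n_1(m),\dots,\pm n_4(m)$ with the cofactor signs, and I will confirm this by expanding one minor, say the one omitting column $4$, as a cubic in $m$ and comparing with $n_4(m)$.

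Then the gcd of the minors equals $\gcd(\mathbf n(m))=1$ when $5\nmid m$, by (i). So $L(m)$ is saturated of rank $3$ inside $\ker_{\mathbb Z}(\mathbf n(m))$, and the two lattices coincide. The main obstacle is simply the cubic determinant expansions; everything else is immediate once the minors are identified with $\pm\mathbf n(m)$.
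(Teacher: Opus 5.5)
Your proposal is correct and follows the paper's proof essentially step for step. You use the same coefficient check for (ii), the same difference trick $n_3-n_2=m$, $n_4-n_3=2m+5$ for (i), and for (iii) the same identification of the maximal minors of the matrix with rows $\mathbf c_i(m)$ with $\pm\mathbf n(m)$, followed by the gcd-of-minors/torsion-freeness argument. The only differences are minor and sound: you check $\mathbf c_i(m)\cdot\mathbf n(m)=0$ directly and expand just one minor (it does equal $n_4(m)$) to fix the proportionality constant instead of all four, and you explicitly note the linear independence needed for a $\mathbb Z$-basis, which the paper leaves implicit.
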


\begin{proof}
Let $A(m)$ be the $3\times4$ integer matrix whose rows are
$\mathbf c_1(m)$, $\mathbf c_2(m)$, and $\mathbf c_3(m)$.
For $i=1,\ldots,4$, let $\Delta_i$ denote the determinant of the
submatrix obtained by deleting the $i$-th column of $A(m)$.
A direct computation yields
\[
(-\Delta_1,\Delta_2,-\Delta_3,\Delta_4)
=
(n_1(m),n_2(m),n_3(m),n_4(m)).
\]
Hence
$
A(m)\mathbf n(m)^{\mathrm T}=\mathbf 0,$ where $\mathbf n(m)^{\mathrm T}$ denotes the transpose of
$\mathbf n(m)$.

We first prove assertion~\textup{(i)}.
Since $n_3(m)-n_2(m)=m$ and $n_4(m)-n_3(m)=2m+5$,
any common divisor of
$n_1(m),n_2(m),n_3(m),n_4(m)$
divides both $m$ and $2m+5$.
Hence
$
\gcd(n_1(m),n_2(m),n_3(m),n_4(m))$
divides $\gcd(m,2m+5)
=
\gcd(m,5).$

If $5$ does not divide $m$, then $\gcd(m,5)=1$, and therefore
\[
\gcd(n_1(m),n_2(m),n_3(m),n_4(m))=1.
\]

Assume now that $5$ divides $m$. Since every nonconstant term of $n_i(m)$ contains a factor of $m$, and the constant term is divisible by $5$, it follows that $5$ divides $n_{i}(m)$ for each $1 \leq i \leq 4$.
Hence $5$ divides
$\gcd(n_1(m),n_2(m),n_3(m),n_4(m))$.
On the other hand,
$
\gcd(n_1(m),n_2(m),n_3(m),n_4(m))$ divides
$\gcd(m,5)=5,$ so
\[
\gcd(n_1(m),n_2(m),n_3(m),n_4(m))=5.
\]

Next we prove assertion~\textup{(ii)}.
The relevant coefficients are
\[
\begin{aligned}
\alpha_{11}&=m+4,\;
\alpha_{12}=\alpha_{21}=m+1,\;
\alpha_{22}=3,\;
\alpha_{31}=1,\\
\alpha_{32}&=m+3,\;
\alpha_{14}=\alpha_{34}=1,\;
\alpha_{24}=m+4.
\end{aligned}
\]

Condition~\textup{(I)} follows from
\[
\alpha_{11}-\alpha_{21}-2\alpha_{31}
=(m+4)-(m+1)-2=1>0.
\]

For condition~\textup{(II)},
\[
\alpha_{12}+\alpha_{22}
=(m+1)+3=m+4,
\]
and therefore
\[
\alpha_{32}=m+3<m+4=\alpha_{12}+\alpha_{22}.
\]
Moreover,
\[
\max\{\alpha_{12},\alpha_{22}\}
=\max\{m+1,3\}
<m+3=\alpha_{32}.
\]

Finally,
\[
\alpha_{24}-\alpha_{14}-2\alpha_{34}
=(m+4)-1-2=m+1>0,
\]
so condition~\textup{(III)} is satisfied.

Finally, we prove assertion~\textup{(iii)}.
Since $A(m)$ has rank~$3$, its Smith normal form is
${\rm diag}(e_1,e_2,e_3,0),$ where $e_1,e_2,e_3$ are the invariant factors.
The discussion following Theorem~II.9 in
\cite{Newman} shows that
$
e_1e_2e_3
=
\gcd(n_1(m),n_2(m),n_3(m),n_4(m)).$

If $5$ does not divide $m$, then assertion~\textup{(i)} gives
$e_1e_2e_3=1$.
Since $e_1$ divides $e_2$ and $e_2$ divides $e_3$, we obtain
$e_1=e_2=e_3=1$.
By the fundamental theorem of finitely generated abelian groups
(\cite[Section~12.1]{DF}),
\[
\mathbb Z^4/L(m)
\cong
\mathbb Z
\oplus
\mathbb Z/e_1\mathbb Z
\oplus
\mathbb Z/e_2\mathbb Z
\oplus
\mathbb Z/e_3\mathbb Z,
\]
and therefore $\mathbb Z^4/L(m)$ is torsion free.
Moreover,
$
A(m)\mathbf n(m)^{\mathrm T}={\bf 0},$
so
$$
L(m) \subseteq {\rm ker}_{\mathbb Z}(n_{1}(m),n_{2}(m),n_{3}(m),n_{4}(m)).$$
Suppose that $L(m) \neq {\rm ker}_{\mathbb Z}(n_{1}(m),n_{2}(m),n_{3}(m),n_{4}(m))$. Since both lattices have
rank~$3$, the quotient
${\rm ker}_{\mathbb Z}(n_{1}(m),n_{2}(m),n_{3}(m),n_{4}(m))/L(m)$
is a nontrivial finite subgroup of $\mathbb Z^4/L(m)$. Since every element of a finite group has finite order, it follows that $\mathbb Z^4/L(m)$ contains a nontrivial torsion element, contradicting the fact that
$\mathbb Z^4/L(m)$ is torsion-free. Hence
$
L(m)={\rm ker}_{\mathbb Z}(n_{1}(m),n_{2}(m),n_{3}(m),n_{4}(m)).$
\end{proof}

The next proposition establishes that the lattices $L(m)$ are
positive.

\begin{proposition}
\label{prop:positive}
For every integer $m\ge1$, the lattice $L(m)$ is positive.
\end{proposition}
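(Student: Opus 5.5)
The idea is to use the vector $\mathbf n(m)$ from Proposition~\ref{prop:family}. Its entries are all strictly positive and it is orthogonal to every element of $L(m)$. Positivity of the lattice follows at once from the existence of such a vector, so no case analysis on the coordinates of lattice vectors is needed.

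First I will check that $n_i(m)>0$ for every $m\ge1$ and $1\le i\le4$. Each $n_i(m)$ is a cubic polynomial in $m$ whose coefficients are all positive. For instance,
\[
n_1(m)=m^3+7m^2+21m+20 .
\]
Hence $n_i(m)\ge n_i(1)>0$ for all $m\ge 1$.

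Second, the proof of Proposition~\ref{prop:family} established $A(m)\mathbf n(m)^{\mathrm T}=\mathbf 0$. This says that $\mathbf c_j(m)\cdot\mathbf n(m)=0$ for $j=1,2,3$. By linearity, $\mathbf u\cdot\mathbf n(m)=0$ for every $\mathbf u\in L(m)$, whether or not $5$ divides $m$. We only need the inclusion $L(m)\subseteq\ker_{\mathbb Z}\mathbf n(m)$, not equality.

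Third, let $\mathbf u=(u_1,\ldots,u_4)\in L(m)\cap\mathbb N^4$. Then
\[
\sum_{i=1}^4 u_i\,n_i(m)=0 .
\]
Every term in this sum is nonnegative, because $u_i\ge0$ and $n_i(m)>0$. So every term vanishes, and since each $n_i(m)>0$ this forces $u_i=0$ for all $i$. Therefore $L(m)\cap\mathbb N^4=\{\mathbf 0\}$.

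The only step needing care is the identity $A(m)\mathbf n(m)^{\mathrm T}=\mathbf 0$. It was asserted in Proposition~\ref{prop:family} via the cofactor computation, so I may assume it. If I wanted independent reassurance, I would expand one product directly. For example,
\[
\mathbf c_1(m)\cdot\mathbf n(m)=(m+4)n_1-(m+1)n_2-n_3-n_4 ,
\]
and the $m^4$ and lower-order terms should cancel. Beyond that, the argument is routine.
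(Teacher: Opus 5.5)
Your proof is correct and takes the same approach as the paper. Both use the orthogonality $A(m)\mathbf n(m)^{\mathrm T}=\mathbf 0$ and the strict positivity of the entries of $\mathbf n(m)$ to force every vector in $L(m)\cap\mathbb N^4$ to vanish; you also make explicit two points the paper leaves implicit, namely that each $n_i(m)>0$ and that only the inclusion $L(m)\subseteq\ker_{\mathbb Z}\mathbf n(m)$ is needed, whether or not $5$ divides $m$.
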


\begin{proof}
Let $\mathbf u=(u_1,\ldots,u_4)\in L(m)\cap\mathbb N^4$. Since
$A(m)\mathbf n(m)^{\mathrm T}={\bf 0}$, the vector $\mathbf n(m)$ is
orthogonal to every vector of $L(m)$. Hence
$
\mathbf n(m)\cdot\mathbf u=0.$
Since each coordinate of $\mathbf n(m)$ is positive and each
$u_i\ge0$, we have
\[
\mathbf n(m)\cdot\mathbf u
=
n_1(m)u_1+\cdots+n_4(m)u_4,
\]
where every summand is nonnegative. Therefore
$\mathbf n(m)\cdot\mathbf u=0$ if and only if
$u_1=u_2=u_3=u_4=0$. Hence
$\mathbf u=\mathbf0$, and consequently
$
L(m)\cap\mathbb N^4=\{\mathbf0\}.$
Thus $L(m)$ is positive.
\end{proof}

\begin{remark}
For $m=1$, Proposition~\ref{prop:family} gives
$\mathbf n(1)=(49,59,60,67).$ The affine monomial curve parametrized by
$x_1=t^{49}$, $x_2=t^{59}$, $x_3=t^{60}$, and $x_4=t^{67}$
is discussed in Example~4.3 of~\cite{OP},
where they observe that the lattice vectors
corresponding to the minimal binomial generators of $I_{L(1)}$
cannot be arranged as the vertices of the three-dimensional unit cube.
\end{remark}

Throughout this section we use the graded reverse lexicographic order
$\prec$ determined by the grading
$
\deg(x_i)=n_i(m)$, for $1\le i\le4,$
and the variable order
$
x_1\succ x_2\succ x_4\succ x_3.$
Thus, for exponent vectors
$\mathbf a=(a_1,a_2,a_3,a_4)$ and
$\mathbf b=(b_1,b_2,b_3,b_4)$,
we have
\[
{\bf x}^{\mathbf a}\prec {\bf x}^{\mathbf b}
\quad\text{if}\quad
\sum_{i=1}^4 a_i n_i(m)
<
\sum_{i=1}^4 b_i n_i(m),
\]
or if
\[
\sum_{i=1}^4 a_i n_i(m)
=
\sum_{i=1}^4 b_i n_i(m),
\]
and the last nonzero coordinate of
$
(a_1-b_1,\;a_2-b_2,\;a_4-b_4,\;a_3-b_3)$
is positive.

The following notation will be used throughout the remainder of this
section. Let
\[
\begin{aligned}
g_1&=x_1^{m+4}-x_2^{m+1}x_3x_4, \;
g_2=x_4^{m+4}-x_1^{m+1}x_2^3x_3, \;
g_3=x_2^{m+3}-x_1x_3^{m+1}x_4,\\
g_4&=x_1^{m+3}x_2^2-x_3^{m+2}x_4^2, \;
g_5=x_2^{m}x_4^{m+3}-x_1^{m+2}x_3^{m+2}, \;
g_6=x_1^2x_4^{m+2}-x_2x_3^{m+3},\\
g_7&=x_1x_2^{m+2}x_4^{m+1}-x_3^{2m+4}.
\end{aligned}
\]

\begin{theorem}\label{thm:groebner}
For every $m\ge1$, the set
$
G=\{g_1,\ldots,g_7\}$
is the reduced Gr\"obner basis of the lattice ideal $I_{L(m)}$ with
respect to $\prec$. Moreover, $I_{L(m)}$ is generic. The set $G$ is the unique minimal
system of binomial generators of $I_{L(m)}$, and
\[
N(L(m))
=
\{\pm\mathbf u_i\mid1\le i\le7\}.
\]
\end{theorem}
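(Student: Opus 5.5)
The plan has three parts: show that $G$ is a reduced Gr\"obner basis of $\langle G\rangle$, show that $\langle G\rangle=I_{L(m)}$, and then read off genericity, minimality and the neighbor set from results already in the paper.

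\textbf{Step 1: $G\subseteq I_{L(m)}$ and leading terms.} Compute the seven vectors explicitly:
\[
\mathbf u_1=(m+4,-(m+1),-1,-1),\qquad \mathbf u_4=(m+3,2,-(m+2),-2),\qquad \mathbf u_7=(1,m+2,-(2m+4),m+1),
\]
and similarly $\mathbf u_5=(-(m+2),m,-(m+2),m+3)$ and $\mathbf u_6=(2,-1,-(m+3),m+2)$. Then check that $g_i$ is, up to sign, the binomial associated with $\mathbf u_i$. So $G\subseteq I_{L(m)}$, and by Proposition~\ref{prop:full-support} every $g_i$ has full support.

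Since $\mathbf n(m)\cdot\mathbf u_i=0$, both monomials of $g_i$ have the same degree. The tie is then broken by reverse lexicographic comparison with $x_3$ as the last variable. By Proposition~\ref{prop:full-support}, $x_3$ divides exactly the monomial $\mathbf x^{\mathbf u_i^-}$. Hence $\operatorname{lm}(g_i)$ is always the first monomial displayed:
\[
x_1^{m+4},\; x_4^{m+4},\; x_2^{m+3},\; x_1^{m+3}x_2^2,\; x_2^mx_4^{m+3},\; x_1^2x_4^{m+2},\; x_1x_2^{m+2}x_4^{m+1}.
\]

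\textbf{Step 2: $G$ is a reduced Gr\"obner basis of $\langle G\rangle$.} Apply Theorem~\ref{thm:buchberger}. Pairs with coprime leading monomials, such as $(g_1,g_2)$, $(g_1,g_3)$, $(g_2,g_3)$ and $(g_1,g_5)$, are dismissed by Proposition~\ref{prop:coprime}. For each remaining pair, compute the $S$-polynomial, which is again a monomial multiple of a lattice binomial, and exhibit an explicit reduction chain to zero. These are finitely many computations with exponents linear in $m$, done uniformly in $m$.

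Reducedness means that no trailing monomial $\mathbf x^{\mathbf u_i^-}$ is divisible by any leading monomial, and no leading monomial divides another. Both are checked by comparing exponents: each trailing monomial carries a large power of $x_3$ and only small powers of $x_1$, $x_2$, $x_4$.

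\textbf{Step 3: $\langle G\rangle=I_{L(m)}$.} This is the main obstacle. I would show that $\operatorname{in}_\prec(I_{L(m)})=\langle \operatorname{lm}(g_1),\dots,\operatorname{lm}(g_7)\rangle$. Then a standard reduction argument finishes the step: reduce any binomial of $I_{L(m)}$ modulo $G$, note that each reduction step produces a monomial times a lattice binomial, and use that $\prec$ is a well-order.

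Since $I_{L(m)}$ is spanned by binomials $\mathbf x^{\mathbf u^+}-\mathbf x^{\mathbf u^-}$, it suffices to show the following. For every nonzero $\mathbf u\in L(m)$ whose positive part gives the leading term (so $(\mathbf u^+)_3=0$, or $\mathbf u^+$ is larger in the tie-break), there is an $i$ with $\mathbf u_i^+\le\mathbf u^+$. To prove this, write $\mathbf u=a\mathbf c_1+b\mathbf c_2+c\mathbf c_3$ and split into cases according to the signs of $a,b,c$. Positivity of $L(m)$ (Proposition~\ref{prop:positive}) and the relation $\mathbf n(m)\cdot\mathbf u=0$ rule out several sign patterns. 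In the remaining cases, the explicit coordinates force the required coordinatewise domination. I expect the mixed-sign cases to need the most care.

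A fallback is to verify the saturation $\langle G\rangle:(x_1x_2x_3x_4)^\infty=\langle G\rangle$. Since $x_3$ is the smallest variable in revlex and no $g_i$ has a leading monomial divisible by $x_3$, saturation with respect to $x_3$ is immediate, but the other variables would require further Gr\"obner computations.

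\textbf{Step 4: conclusions.} Once $I_{L(m)}=\langle G\rangle$, the ideal is generated by binomials of full support, so it is generic. By \cite[Remark~4.4(3)]{PS} it has a unique minimal binomial generating set. The set $G$ is minimal, because no $g_i$ can be omitted: its leading monomial is a minimal generator of the initial ideal and its degree $\mathbf n(m)\cdot\mathbf u_i^+$ is not attained by combinations of the others. Finally, Proposition~\ref{cor:neighbors} gives $N(L(m))=\{\pm\mathbf u_i\mid 1\le i\le 7\}$.
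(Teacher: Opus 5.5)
Steps 1 and 2 match the paper, but Step 3, which carries the real content of the theorem, is not proved. Your primary route is to show that every nonzero $\mathbf u\in L(m)$ with leading monomial $\mathbf x^{\mathbf u^+}$ has some $\mathbf u_i^+\le\mathbf u^+$. That is equivalent to the statement that $G$ is a Gr\"obner basis of $I_{L(m)}$, so it cannot be left as an anticipated case analysis. The case split you propose also does not get started:
\begin{itemize}
\item The sign pattern of $(a,b,c)$ does not determine that of $\mathbf u$. The coefficient vectors $(1,1,1)$ and $(1,1,2)$ give $\mathbf u_6$ and $\mathbf u_7$, whose second coordinates have opposite signs.
\item Positivity of $L(m)$ only says that no nonzero $\mathbf u$ lies in $\pm\mathbb N^4$, so it settles none of the cases.
\item You give no mechanism producing a dominating $\mathbf u_i^+$ uniformly in $m$.
\end{itemize}
Your fallback stops exactly where the work lies. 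It also omits why saturation is the right target: since $g_1,g_2,g_3$ are the binomials of the basis $\mathbf c_1,\mathbf c_2,\mathbf c_3$, one has $J:(x_1x_2x_3x_4)^\infty=I_{L(m)}$ for $J=\langle G\rangle$ by \cite[Corollary~2.5]{ES}.

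The missing idea is that $g_7$ transfers $x_3$-saturation to the other variables, so no further Gr\"obner computations are needed. Indeed, $x_1x_2x_4$ divides $x_1x_2^{m+2}x_4^{m+1}$, and $x_1x_2^{m+2}x_4^{m+1}\equiv x_3^{2m+4}$ modulo $J$. So $(x_1x_2x_4)^sf\in J$ implies $x_3^{s(2m+4)}f\in J$, hence $f\in J:x_3^\infty=J$ by the revlex argument you already invoke \cite[Lemma~12.1]{Stu}. Thus $J$ is saturated with respect to $x_1x_2x_3x_4$ and equals $I_{L(m)}$. The paper packages the same mechanism through $J=(J:(x_1x_2x_4)^\infty)\cap(J+(x_1x_2x_4)^r)$ and $x_3^{r(2m+4)}\in J+(x_1x_2x_4)^r$.

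Separately, your minimality argument in Step 4 fails as stated. A binomial whose leading monomial minimally generates the initial ideal can still lie in the ideal generated by the others. Your claim about degrees is neither proved nor the right criterion. What works is that no monomial occurring in $G$ divides another. Then, for $j\neq i$, no multiple $\mathbf x^{\mathbf c}g_j$ has $\mathbf x^{\mathbf u_i^+}$ as a term. The component of $\langle G\setminus\{g_i\}\rangle$ in degree $\deg g_i$ is spanned by such multiples, so every element of it has zero coefficient on $\mathbf x^{\mathbf u_i^+}$, and $g_i$ is not in it. The paper obtains this indispensability from \cite{CTV}.

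Finally, the pair $\{g_4,g_5\}$ is not uniform in $m$. The $x_2$-exponent of $\operatorname{lcm}(\operatorname{lm}(g_4),\operatorname{lm}(g_5))$ is $\max(2,m)$, so $m=1$ needs its own computation.
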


\begin{proof} We first show that $G$ is a Gr\"obner basis for the ideal $J=\langle g_{i} \mid 1 \leq i \leq 7 \rangle$ with
respect to $\prec$. Since every element of $G$ is a binomial
$\mathbf x^{\mathbf u^+}-\mathbf x^{\mathbf u^-}$ with
$\mathbf u=\mathbf u^+-\mathbf u^-\in L(m)$, we have $J\subseteq I_{L(m)}$. The leading monomials of the elements of $G$ are
\[
\operatorname{lm}(g_1)=x_1^{m+4},\;
\operatorname{lm}(g_2)=x_4^{m+4},\;
\operatorname{lm}(g_3)=x_2^{m+3},\;
\operatorname{lm}(g_4)=x_1^{m+3}x_2^2, \]
\[\operatorname{lm}(g_5)=x_2^mx_4^{m+3},\;
\operatorname{lm}(g_6)=x_1^2x_4^{m+2},\;
\operatorname{lm}(g_7)=x_1x_2^{m+2}x_4^{m+1}.
\]

We verify Buchberger's criterion. Since the leading monomials of $g_i$ and $g_j$ are relatively prime for
each of the pairs
\[
\{g_1,g_2\},\;
\{g_1,g_3\},\;
\{g_1,g_5\},\;
\{g_2,g_3\},\;
\{g_2,g_4\},\;
\{g_3,g_6\},
\]
their $S$-polynomials reduce to zero by
Proposition~\ref{prop:coprime}. For the following pairs, we have
\[
\begin{aligned}
S(g_1,g_4)&=x_2^2g_1-x_1g_4=-x_3x_4g_3\stackrel{g_3}{\longrightarrow}0,\\
S(g_1,g_6)&=x_4^{m+2}g_1-x_1^{m+2}g_6=-x_2x_3g_5\stackrel{g_5}{\longrightarrow}0,\\
S(g_2,g_5)&=x_2^mg_2-x_4g_5=-x_1^{m+1}x_3g_3\stackrel{g_3}{\longrightarrow}0,\\
S(g_2,g_6)&=x_1^2g_2-x_4^2g_6=-x_2x_3g_4\stackrel{g_4}{\longrightarrow}0,\\
S(g_3,g_4)&=x_1^{m+3}g_3-x_2^{m+1}g_4=-x_3^{m+1}x_4g_1\stackrel{g_1}{\longrightarrow}0,\\
S(g_3,g_5)&=x_4^{m+3}g_3-x_2^3g_5=-x_1x_3^{m+1}g_2\stackrel{g_2}{\longrightarrow}0,\\
S(g_3,g_7)&=x_1x_4^{m+1}g_3-x_2g_7=-x_3^{m+1}g_6\stackrel{g_6}{\longrightarrow}0,\\
S(g_4,g_6)&=x_4^{m+2}g_4-x_1^{m+1}x_2^2g_6=-x_3^{m+2}g_2\stackrel{g_2}{\longrightarrow}0,\\
S(g_4,g_7)&=x_2^mx_4^{m+1}g_4-x_1^{m+2}g_7=-x_3^{m+2}g_5\stackrel{g_5}{\longrightarrow}0,\\
S(g_5,g_6)&=x_1^2g_5-x_2^mx_4g_6=-x_3^{m+2}g_1\stackrel{g_1}{\longrightarrow}0,\\
S(g_5,g_7)&=x_1x_2^2g_5-x_4^2g_7=-x_3^{m+2}g_4\stackrel{g_4}{\longrightarrow}0,\\
S(g_6,g_7)&=x_2^{m+2}g_6-x_1x_4g_7=-x_3^{m+3}g_3\stackrel{g_3}{\longrightarrow}0.
\end{aligned}
\]

It remains to consider the pairs
$\{g_1,g_7\}$,
$\{g_2,g_7\}$, and
$\{g_4,g_5\}$.

\begin{enumerate}
\item[\textup{(i)}] \emph{The pair $\{g_1,g_7\}$.} Since
$
\operatorname{lcm}\!\left(\operatorname{lm}(g_1),\operatorname{lm}(g_7)\right)
=
x_1^{m+4}x_2^{m+2}x_4^{m+1},$
we have
\[
S(g_1,g_7)
=
x_2^{m+2}x_4^{m+1}g_1
-
x_1^{m+3}g_7.
\]
Hence
\[
\begin{aligned}
S(g_1,g_7)
&=
x_2^{m+2}x_4^{m+1}
(x_1^{m+4}-x_2^{m+1}x_3x_4)
-
x_1^{m+3}
(x_1x_2^{m+2}x_4^{m+1}-x_3^{2m+4})\\
&=
x_1^{m+3}x_3^{2m+4}
-
x_2^{2m+3}x_3x_4^{m+2}.
\end{aligned}
\]
The leading monomial of $S(g_1,g_7)$ is
$x_2^{2m+3}x_3x_4^{m+2}$, which is divisible by
$\operatorname{lm}(g_3)$. We have
\[S(g_1,g_7)
\stackrel{g_3}{\longrightarrow}
x_1^{m+3}x_3^{2m+4}-x_1x_2^mx_3^{m+2}x_4^{m+3}=
-x_1x_3^{m+2}g_5
\stackrel{g_5}{\longrightarrow}
0.\]

\item[\textup{(ii)}] \emph{The pair $\{g_2,g_7\}$.} Since
$
\operatorname{lcm}\!\left(\operatorname{lm}(g_2),\operatorname{lm}(g_7)\right)
=
x_1x_2^{m+2}x_4^{m+4},$
we have
\[
S(g_2,g_7)
=
x_1x_2^{m+2}g_2
-
x_4^3g_7.
\]
Hence
\[
S(g_2,g_7)
=
x_3^{2m+4}x_4^3
-
x_1^{m+2}x_2^{m+5}x_3.
\]
The leading monomial of $S(g_2,g_7)$ is
$x_1^{m+2}x_2^{m+5}x_3$, which is divisible by
$\operatorname{lm}(g_3)$. We have
\[S(g_2,g_7)
\stackrel{g_3}{\longrightarrow}
x_3^{2m+4}x_4^3-x_1^{m+3}x_2^2x_3^{m+2}x_4=
-x_3^{m+2}x_4\,g_4
\stackrel{g_4}{\longrightarrow}
0.\]

\item[\textup{(iii)}] \emph{The pair $\{g_4,g_5\}$.} We distinguish two cases.

\begin{enumerate}
\item[\textup{(a)}] \emph{$m\ge2$.} Since
$
\operatorname{lcm}\!\left(\operatorname{lm}(g_4),\operatorname{lm}(g_5)\right)
=
x_1^{m+3}x_2^mx_4^{m+3},$
we have
\[
S(g_4,g_5)
=
x_2^{m-2}x_4^{m+3}g_4
-
x_1^{m+3}g_5.
\]
Hence
\[
S(g_4,g_5)
=
x_1^{2m+5}x_3^{m+2}
-
x_2^{m-2}x_3^{m+2}x_4^{m+5}.
\]
The leading monomial of $S(g_4,g_5)$ is
$x_1^{2m+5}x_3^{m+2}$, which is divisible by
$\operatorname{lm}(g_1)$. We have
$$S(g_4,g_5)
\stackrel{g_1}{\longrightarrow}
x_1^{m+1}x_2^{m+1}x_3^{m+3}x_4
-
x_2^{m-2}x_3^{m+2}x_4^{m+5}=
-x_2^{m-2}x_3^{m+2}x_4\,g_2
\stackrel{g_2}{\longrightarrow}
0.$$

\item[\textup{(b)}] \emph{$m=1$.} Since
$
\operatorname{lcm}\!\left(\operatorname{lm}(g_4),\operatorname{lm}(g_5)\right)
=
x_1^4x_2^2x_4^4,$
we have
\[
S(g_4,g_5)
=
x_4^4g_4
-
x_1^4x_2g_5.
\]
Hence
\[
S(g_4,g_5)
=
x_1^7x_2x_3^3
-
x_3^3x_4^6.
\]
The leading monomial of $S(g_4,g_5)$ is
$x_1^7x_2x_3^3$, which is divisible by
$\operatorname{lm}(g_1)$. We have
\[S(g_4,g_5)
\stackrel{g_1}{\longrightarrow}
x_1^2x_2^3x_3^4x_4
-
x_3^3x_4^6=
-x_3^3x_4\,g_2
\stackrel{g_2}{\longrightarrow}
0.
\]

\end{enumerate}

\end{enumerate}
Therefore every $S$-polynomial reduces to zero modulo $G$. By
Buchberger's criterion (Theorem~\ref{thm:buchberger}), $G$ is a
Gr\"obner basis of $J$.

Next we show that $G$ is the reduced Gr\"obner basis of $J$ with respect to $\prec$. Since $\operatorname{lc}(g_i)=1$ for every $1\le i\le7$, it remains to
verify that no monomial of an element of $G$ is divisible by the
leading monomial of another element of $G$. This is immediate, since
no leading monomial divides another leading monomial, and every
non-leading monomial contains a positive power of $x_3$, whereas no
leading monomial does. Hence $G$ is the reduced Gr\"obner basis of
$J$. 

Since $x_3$ does not divide any element of $G$, the set $G$ is a Gr\"obner basis of
$J:x_3^{\infty}$ by \cite[Lemma 12.1]{Stu}, thus $J:x_3^{\infty}=J$.

We now show that $J=I_{L(m)}$. It is well known that there is a positive integer $r$ such that $$J=(J:(x_1x_2x_4)^\infty) \cap (J+(x_1x_2x_4)^r).$$ We have \[
x_3^{r(2m+4)}
=
x_2^{(m+1)r}x_4^{mr}\,(x_1x_2x_4)^r
\;-\;
g_7\sum_{i=0}^{r-1}x_3^{(2m+4)(r-1-i)}\bigl(x_1x_2^{m+2}x_4^{m+1}\bigr)^i.
\] Thus $x_3^{r(2m+4)}\in J+(x_1x_2x_4)^r$, and therefore
$\bigl(J+(x_1x_2x_4)^r\bigr):x_3^\infty=S$. Hence
\[
J
=
J:x_3^\infty
=
\Bigl[\bigl(J:(x_1x_2x_4)^\infty\bigr)
\cap
\bigl(J+(x_1x_2x_4)^r\bigr)\Bigr]:x_3^\infty,
\] so \[
J
=
\bigl(J:(x_1x_2x_3x_4)^\infty\bigr)\cap \bigl(J+(x_1x_2x_4)^r\bigr):x_3^\infty
=\bigl(J:(x_1x_2x_3x_4)^\infty\bigr)\cap S=
J:(x_1x_2x_3x_4)^\infty.
\] By \cite[Corollary 2.5]{ES}, we have that $J=I_{L(m)}$. Since each binomial $g_i=\mathbf{x}^{\mathbf{u}_i^+}-\mathbf{x}^{\mathbf{u}_i^-}$
has full support, it follows that $I_{L(m)}$ is generic.

Moreover, no monomial occurring in an element of $G$ divides another
such monomial, and each monomial occurs in exactly one element of $G$.
Hence every monomial occurring in $G$ is indispensable, that is, every
system of binomial generators of $I_{L(m)}$ contains a binomial having
this monomial as one of its terms
\cite[Remark~2.3]{CTV}. Furthermore, the graph associated with $G$ in
the sense of Theorem~3.3 of~\cite{CTV} consists of seven connected
components, each of which is a single edge joining the two monomials
of one of the binomials $g_1,\ldots,g_7$. Therefore every binomial
$g_i$ is indispensable by \cite[Theorem~3.3]{CTV}. Since
$I_{L(m)}$ is generic, \cite[Remark~4.4(3)]{PS}
implies that $G=\{g_1,\ldots,g_7\}$ is the unique minimal system of
binomial generators of $I_{L(m)}$. 

Finally, Proposition~\ref{cor:neighbors} yields
$
N(L(m))=\{\pm\mathbf u_1,\ldots,\pm\mathbf u_7\}.$
\end{proof}

We now determine the $f$-vector of the algebraic Scarf complex of
$I_{L(m)}$. Since $I_{L(m)}$ is generic, \cite[Theorem~4.2]{PS} shows that the algebraic Scarf complex of
$I_{L(m)}$ is the minimal free resolution of $S/I_{L(m)}$.

\begin{proposition}\label{prop:scarf}
For every integer $m\ge1$, the ring $S/I_{L(m)}$ has Betti numbers
$$
\beta_0(S/I_{L(m)})=1, \ \beta_1(S/I_{L(m)})=7, \ \beta_2(S/I_{L(m)})=12, \ \beta_3(S/I_{L(m)})=6,$$ and
$
\beta_i(S/I_{L(m)})=0$ for every $i \geq 4$. Consequently, the algebraic Scarf complex of $I_{L(m)}$ has
$f$-vector
$
(f_0,f_1,f_2)=(7,12,6).$
\end{proposition}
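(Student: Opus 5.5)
Since $I_{L(m)}$ is generic (Theorem~\ref{thm:groebner}), \cite[Theorem~4.2]{PS} shows that the algebraic Scarf complex is the minimal free resolution of $S/I_{L(m)}$. Its free modules in homological degree $i$ are indexed by the faces of $\Delta_L^0$ modulo the action of $L$, that is, by the $L$-orbits of $i$-element faces of $\Delta_L$ containing $\mathbf 0$, counted with the appropriate normalization. Thus $\beta_i$ equals the number of $(i-1)$-dimensional faces of the Scarf complex, and the statement reduces to computing the $f$-vector.

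We have $\beta_0=1$ trivially. By Theorem~\ref{thm:groebner}, $\beta_1=7$, since $G$ is the unique minimal generating set. Since $I_{L(m)}$ has codimension $3$ (rank of $L(m)$ is $3$) and $S/I_{L(m)}$ is positively graded, the Auslander--Buchsbaum formula gives projective dimension at most $4-\operatorname{depth}$. The Scarf complex of a generic lattice ideal in $r$ variables has dimension at most $r-1$, so $\beta_i=0$ for $i\ge 4$.

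For $\beta_2$ and $\beta_3$, I would use the Euler characteristic. The Hilbert series numerator $\sum(-1)^i\beta_i\,t^{\deg}$ must vanish at $t=1$ with multiplicity equal to the codimension. This gives $1-7+\beta_2-\beta_3=0$, so $\beta_2=\beta_3+6$. To pin down $\beta_3$, I would identify the maximal faces directly. A triangle $\{\mathbf 0,\mathbf a,\mathbf b\}$ with $\mathbf a,\mathbf b,\mathbf b-\mathbf a$ all neighbors lies in the Scarf complex exactly when no other lattice vector is $\le\max(\mathbf 0,\mathbf a,\mathbf b)$. The $S$-pair syzygies computed in the proof of Theorem~\ref{thm:groebner} exhibit such triples: the relations $S(g_i,g_j)=\pm(\text{monomial})\,g_k$ correspond to $\mathbf u_i-\mathbf u_j=\pm\mathbf u_k$ up to sign. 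I would list the $12$ edges of the quotient complex from the $12$ relations of the form $\mathbf u_i\pm\mathbf u_j=\pm\mathbf u_k$, for instance $\mathbf u_1+\mathbf u_3=\mathbf u_4$, $\mathbf u_2+\mathbf u_3=\mathbf u_5$, $\mathbf u_4+\mathbf u_2=\mathbf u_6$, $\mathbf u_6+\mathbf u_3=\mathbf u_7$, and so on. Then I would check the Scarf condition for each via Lemma~\ref{lem:neighbor}-type comparisons of componentwise maxima, using the explicit coordinates. Finally, I would identify $6$ tetrahedra $\{\mathbf 0,\mathbf a,\mathbf b,\mathbf c\}$ up to translation. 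Alternatively, once $\beta_2=12$ is verified, $\beta_3=6$ follows from the Euler relation.

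The main obstacle is certifying exactly $12$ orbit-classes of edges, that is, showing no other pair lies in the Scarf complex. Here I would use that $\beta_2$ equals the number of minimal syzygies, which can be read off from a Schreyer-type argument. The $S$-pairs of the reduced Gröbner basis give $21$ syzygies, from which redundant ones are pruned by the standard criterion that a syzygy $(i,j)$ is redundant if some $\operatorname{lm}(g_k)$ divides $\operatorname{lcm}(\operatorname{lm}(g_i),\operatorname{lm}(g_j))$ with the other two pairs present. Working this pruning out, uniformly in $m$ with the case $m=1$ checked separately as in Theorem~\ref{thm:groebner}, should leave $12$ syzygies. Then $\beta_3=6$ follows, and the $f$-vector is $(f_0,f_1,f_2)=(\beta_1,\beta_2,\beta_3)=(7,12,6)$.
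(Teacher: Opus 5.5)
Your steps for $\beta_0=1$, $\beta_1=7$, $\beta_i=0$ for $i\ge4$, and the Euler relation $\beta_2=\beta_3+6$ are all fine. The gap is in pinning down $\beta_2$, because Euler's relation is only one equation.

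\textbf{The pruning step gives only an upper bound.} You say that $\beta_2$ ``can be read off from a Schreyer-type argument.'' That is not correct in general. Pruning $S$-pairs by the chain criterion yields a \emph{generating} set of $\operatorname{Syz}(g_1,\dots,g_7)$. Its size is at least $\beta_2(S/\operatorname{in}_\prec I_{L(m)})$, which can exceed $\beta_2(S/I_{L(m)})$. Here the pruning does work out. The nine pairs $\{g_1,g_2\},\{g_1,g_3\},\{g_1,g_5\},\{g_1,g_7\},\{g_2,g_3\},\{g_2,g_4\},\{g_2,g_7\},\{g_3,g_6\},\{g_4,g_5\}$ are removed through $g_6,g_4,g_6,g_4,g_5,g_6,g_5,g_7,g_6$ respectively, leaving the twelve one-step pairs. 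But this proves only $\beta_2\le 12$, hence $\beta_3\le 6$. Nothing in your argument excludes $\beta_2<12$.

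\textbf{The lower bound is only promised, and the enumeration is off.} The reverse inequality needs twelve pairwise inequivalent $L$-orbits of triangles certified as faces of $\Delta_{L(m)}$. Among $\mathbf u_1,\dots,\mathbf u_7$ there are only \emph{six} relations of the form $\mathbf u_k=\mathbf u_i+\mathbf u_j$:
$\mathbf u_4=\mathbf u_1+\mathbf u_3$, $\mathbf u_5=\mathbf u_2+\mathbf u_3$, $\mathbf u_6=\mathbf u_1+\mathbf u_5=\mathbf u_2+\mathbf u_4$, and $\mathbf u_7=\mathbf u_3+\mathbf u_6=\mathbf u_4+\mathbf u_5$.
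The twelve one-step $S$-pairs map two-to-one onto these. For example, $S(g_1,g_4)$ and $S(g_3,g_4)$ both encode $\mathbf u_4=\mathbf u_1+\mathbf u_3$. So ``listing the $12$ relations'' would fail.

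The twelve orbits are, for each relation $\mathbf u_k=\mathbf u_i+\mathbf u_j$, the two triangles $\{\mathbf 0,\mathbf u_i,\mathbf u_k\}$ and $\{\mathbf 0,\mathbf u_j,\mathbf u_k\}$. These are antipodal and not translates of each other. Each must be shown to have no lattice point other than its three vertices below its componentwise maximum, uniformly in $m$. That is twelve case analyses of the kind carried out in Theorem~\ref{thm:converse}, none of which you perform.

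\textbf{How the paper avoids this.} It invokes the formula $(1,t,2t-2,t-1)$ from Ojeda's proof for generic codimension-three lattice ideals with $t\ge7$ generators. In your framework, this supplies exactly the missing second linear relation $\beta_2=2\beta_3$; geometrically, each triangle of the Scarf complex modulo $L$ is a face of exactly two tetrahedra. Together with Euler's relation and $\beta_1=7$, it immediately forces $(1,7,12,6)$.
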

 
\begin{proof} By Proposition~\ref{prop:full-support} and
Theorem~\ref{thm:groebner}, the lattice ideal $I_{L(m)}$ is generic
and has exactly seven minimal binomial generators.

The proof of Corollary~1.3 in~\cite{Ojeda} shows that, for a generic
lattice ideal $I_L$ of codimension~$3$ with $t\ge7$ minimal binomial
generators,
\[
\beta_0(S/I_L)=1, \
\beta_1(S/I_L)=t, \
\beta_2(S/I_L)=2t-2, \
\beta_3(S/I_L)=t-1,
\]
and
$
\beta_i(S/I_L)=0$ for every $i \geq 4$.
Since $I_{L(m)}$ has $t=7$ minimal generators, it follows that
\[
\beta_0(S/I_{L(m)})=1, \
\beta_1(S/I_{L(m)})=7, \
\beta_2(S/I_{L(m)})=12, \
\beta_3(S/I_{L(m)})=6,
\]
and
$
\beta_i(S/I_{L(m)})=0$ for every $i \geq 4$.

By~\cite[Theorem~4.2]{PS}, the minimal free resolution of
$S/I_{L(m)}$ is supported on the algebraic Scarf complex.
Hence the numbers of vertices, edges, and triangles of the algebraic
Scarf complex are
$\beta_1(S/I_{L(m)})$,
$\beta_2(S/I_{L(m)})$, and
$\beta_3(S/I_{L(m)})$, respectively.
Therefore its $f$-vector is $
(f_0,f_1,f_2)=(7,12,6).$
\end{proof}

\section{Restrictions on neighbors of the origin}
\label{sec:lambda}

In this section we study lattice vectors of the form
\[
\mathbf u^{(\mu,\lambda)}
=
\mu\mathbf c_1+\mathbf c_2+\lambda\mathbf c_3,
\]
for positive integers $\mu$ and $\lambda$, where
$\{\mathbf c_1,\mathbf c_2,\mathbf c_3\}$ is an admissible basis of
$L$. Our goal is to determine which such vectors can occur as neighbors
of the origin. We obtain a complete characterization of the vectors
$\mathbf u^{(\mu,\lambda)}$ having positive first and fourth
coordinates that are neighbors of the origin. More precisely, under
conditions~\textup{\textbf{(I)}}, \textup{\textbf{(II)}}, and
\textup{\textbf{(III)}}, we prove that
$\mathbf u^{(\mu,\lambda)}$ is a neighbor of the origin if and only if
$\mu=1$ and $\lambda\le2$.

The case $\mu=1$, namely the vectors
$
\mathbf u^{(1,\lambda)}
=
\mathbf c_1+\mathbf c_2+\lambda\mathbf c_3,
$
was considered in~\cite{OP}, but no complete characterization of the
neighbors of the origin arising from these vectors was obtained. This
motivates our study of the more general family
$\mathbf u^{(\mu,\lambda)}$.

The coordinates of $\mathbf u^{(\mu,\lambda)}$ are
\[
\begin{aligned}
u^{(\mu,\lambda)}_1&=\mu\alpha_{11}-\alpha_{21}-\lambda\alpha_{31},&
u^{(\mu,\lambda)}_2&=-\mu\alpha_{12}-\alpha_{22}+\lambda\alpha_{32},\\
u^{(\mu,\lambda)}_3&=-\mu\alpha_{13}-\alpha_{23}-\lambda\alpha_{33},&
u^{(\mu,\lambda)}_4&=-\mu\alpha_{14}+\alpha_{24}-\lambda\alpha_{34}.
\end{aligned}
\]
Since every $\alpha_{ij}>0$, $u^{(\mu,\lambda)}_3$ is strictly
negative for every positive integers $\mu$ and $\lambda$, so
$(\mathbf u^{(\mu,\lambda)})^+_3=0$.

We say that $\mathbf u^{(\mu,\lambda)}$ has \emph{positive first and
fourth coordinates} if
\[
\mu\alpha_{11}>\alpha_{21}+\lambda\alpha_{31}
\qquad\text{and}\qquad
\alpha_{24}>\mu\alpha_{14}+\lambda\alpha_{34}.
\]

We first derive a general restriction on the parameter $\lambda$ for
arbitrary $\mu$.

\begin{theorem}\label{thm:mu-lambda}
Let $\{\mathbf c_1,\mathbf c_2,\mathbf c_3\}$ be an admissible basis
satisfying condition~\textup{\textbf{(II)}}, and let $\mu\ge1$ and
$\lambda\ge\mu+2$ be integers.
Assume that $\mathbf u^{(\mu,\lambda)}$ has positive first and
fourth coordinates. Then $\mathbf u^{(\mu,\lambda)}$ has full support and is not a
neighbor of the origin in $L$.
\end{theorem}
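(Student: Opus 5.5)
The plan is to apply the coordinatewise criterion of Lemma~\ref{lem:neighbor}: I will exhibit an explicit lattice vector $\mathbf b\in L\setminus\{\mathbf 0,\mathbf u^{(\mu,\lambda)}\}$ with $\mathbf b\le(\mathbf u^{(\mu,\lambda)})^+$. The natural candidate is $\mathbf b=\mathbf c_3=(-\alpha_{31},\alpha_{32},-\alpha_{33},-\alpha_{34})$. Its only positive coordinate is the second one, and the hypothesis $\lambda\ge\mu+2$ makes the second coordinate of $\mathbf u^{(\mu,\lambda)}$ large.

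First I determine the sign pattern of $\mathbf u=\mathbf u^{(\mu,\lambda)}$. By assumption, $u_1>0$ and $u_4>0$. We already noted that $u_3<0$. For the second coordinate, I rewrite
\[
u_2=-\mu\alpha_{12}-\alpha_{22}+\lambda\alpha_{32}
=\mu(\alpha_{32}-\alpha_{12})+(\alpha_{32}-\alpha_{22})+(\lambda-\mu-1)\alpha_{32}.
\]
Condition~\textup{(II)} gives $\alpha_{32}>\alpha_{12}$ and $\alpha_{32}>\alpha_{22}$, so the first two summands are positive. Since $\lambda-\mu-1\ge1$, the third summand is at least $\alpha_{32}$. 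Hence $u_2>\alpha_{32}>0$. It follows that $\mathbf u$ has full support and $\mathbf u^+=(u_1,u_2,0,u_4)$.

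Next I check $\mathbf c_3\le\mathbf u^+$ coordinate by coordinate:
\begin{itemize}
\item $-\alpha_{31}<0<u_1$;
\item $\alpha_{32}<u_2$, by the bound just obtained;
\item $-\alpha_{33}<0$;
\item $-\alpha_{34}<0<u_4$.
\end{itemize}
Also $\mathbf c_3\neq\mathbf 0$, and $\mathbf c_3\neq\mathbf u$ because their first coordinates have opposite signs. By Lemma~\ref{lem:neighbor}, $\mathbf u$ is therefore not a neighbor of the origin.

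I expect no serious obstacle. The only step needing care is the regrouping of $u_2$, which shows $u_2\ge\alpha_{32}$ using only condition~\textup{(II)} and $\lambda\ge\mu+2$. Conditions~\textup{(I)} and~\textup{(III)} are replaced here by the positivity assumptions on the first and fourth coordinates.
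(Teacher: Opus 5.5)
Your proposal is correct and is essentially the paper's proof: both apply Lemma~\ref{lem:neighbor} with $\mathbf c_3$ as the witness, using only the lower bound in condition~\textup{(II)} together with $\lambda\ge\mu+2$. The only cosmetic difference is that your single regrouping gives $u_2>\alpha_{32}$ directly, whereas the paper separately proves $u_2>0$ and $(\lambda-1)\alpha_{32}>\mu\alpha_{12}+\alpha_{22}$.
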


\begin{proof}
Since $\{\mathbf c_1,\mathbf c_2,\mathbf c_3\}$ is a $\mathbb Z$-basis
of $L$, the vector $\mathbf u^{(\mu,\lambda)}$ belongs to $L$. Condition~\textup{\textbf{(II)}} gives
$\alpha_{32}>\max(\alpha_{12},\alpha_{22})$.
Hence
$2\alpha_{32}>\alpha_{12}+\alpha_{22}$ and
$\alpha_{32}>\alpha_{12}$.
Since $\lambda\ge\mu+2$, we have
\[
\lambda\alpha_{32}
\ge
(\mu+2)\alpha_{32}
=
\mu\alpha_{32}+2\alpha_{32}
>
\mu\alpha_{12}+\alpha_{12}+\alpha_{22}
>
\mu\alpha_{12}+\alpha_{22},
\]
and therefore
$
u^{(\mu,\lambda)}_2
=
\lambda\alpha_{32}
-\mu\alpha_{12}
-\alpha_{22}
>0.
$
By assumption, the first and fourth coordinates of
$\mathbf u^{(\mu,\lambda)}$ are positive. Since the third coordinate is
always negative, it follows that $\mathbf u^{(\mu,\lambda)}$ has full
support. Thus
\[
(\mathbf u^{(\mu,\lambda)})^+
=
\bigl(
\mu\alpha_{11}-\alpha_{21}-\lambda\alpha_{31},
\,
\lambda\alpha_{32}-\mu\alpha_{12}-\alpha_{22},
\,
0,
\,
\alpha_{24}-\mu\alpha_{14}-\lambda\alpha_{34}
\bigr).
\]

It is enough to show that
$\mathbf c_3\in L\setminus\{\mathbf0,\mathbf u^{(\mu,\lambda)}\}$ and
$\mathbf c_3\le(\mathbf u^{(\mu,\lambda)})^+$.

Since
$\mathbf c_3
=
(-\alpha_{31},
\alpha_{32},
-\alpha_{33},
-\alpha_{34})$,
it remains to verify the coordinatewise inequalities.

For the first coordinate,
\[
-\alpha_{31}
<
0
<
\mu\alpha_{11}-\alpha_{21}-\lambda\alpha_{31},
\]
where the second inequality follows from the assumption that the first
coordinate of $\mathbf u^{(\mu,\lambda)}$ is positive.

For the third coordinate, we have
$
-\alpha_{33}
\le
0,$
because the third coordinate of $\mathbf u^{(\mu,\lambda)}$ is
strictly negative.

For the fourth coordinate,
\[
-\alpha_{34}
<
0
<
\alpha_{24}-\mu\alpha_{14}-\lambda\alpha_{34},
\]
where the second inequality follows from the assumption that the fourth
coordinate of $\mathbf u^{(\mu,\lambda)}$ is positive.

For the second coordinate, it is enough to prove
$
(\lambda-1)\alpha_{32}
\ge
\mu\alpha_{12}
+\alpha_{22}.
$
We shall establish the stronger inequality
$
(\lambda-1)\alpha_{32}
>
\mu\alpha_{12}
+\alpha_{22}.
$
Indeed,
\[
(\lambda-1)\alpha_{32}
\ge
(\mu+1)\alpha_{32}
=
\mu\alpha_{32}
+\alpha_{32}
>
\mu\alpha_{12}
+\alpha_{22},
\]
where the last inequality follows from
condition~\textup{\textbf{(II)}}.
Hence
$\mathbf c_3\le(\mathbf u^{(\mu,\lambda)})^+$.

Since
$\{\mathbf c_1,\mathbf c_2,\mathbf c_3\}$ is a basis of $L$,
we have $\mathbf c_3\neq\mathbf0$.
Moreover,
$\mathbf c_3\neq\mathbf u^{(\mu,\lambda)}$
because the first coordinate of $\mathbf c_3$ is negative,
whereas the first coordinate of
$\mathbf u^{(\mu,\lambda)}$ is positive.
Hence
$
\mathbf c_3
\in
L\setminus\{\mathbf0,\mathbf u^{(\mu,\lambda)}\}$ and
$\mathbf c_3
\le
(\mathbf u^{(\mu,\lambda)})^+.$ Lemma~\ref{lem:neighbor} therefore implies that
$\mathbf u^{(\mu,\lambda)}$
is not a neighbor of the origin.
\end{proof}

\begin{remark}
Only the lower bound $\alpha_{32}>\max(\alpha_{12},\alpha_{22})$ in
condition~\textup{\textbf{(II)}} is used in the proof. In particular,
conditions~\textup{\textbf{(I)}} and~\textup{\textbf{(III)}} are not
needed, and the upper bound $\alpha_{32}<\alpha_{12}+\alpha_{22}$
plays no role.
\end{remark}

As an immediate consequence of
Theorem~\ref{thm:mu-lambda}, we obtain the special case $\mu=1$.

\begin{corollary}\label{cor:lambda-general}
Let $\{\mathbf c_1,\mathbf c_2,\mathbf c_3\}$ be an admissible basis
satisfying condition~\textup{\textbf{(II)}}, and let $\lambda\ge3$
be an integer.
If $\mathbf u^{(1,\lambda)}$ has positive first and fourth coordinates,
then $\mathbf u^{(1,\lambda)}$ is not a neighbor of the origin in $L$.
\end{corollary}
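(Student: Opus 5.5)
The corollary is the special case $\mu=1$ of Theorem~\ref{thm:mu-lambda}, so the plan is to check its hypotheses and invoke it. Take an admissible basis $\{\mathbf c_1,\mathbf c_2,\mathbf c_3\}$ satisfying condition~\textup{\textbf{(II)}} and an integer $\lambda\ge3$. Setting $\mu=1$, we have $\mu\ge1$, and the condition $\lambda\ge\mu+2=3$ is exactly the hypothesis $\lambda\ge3$.

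Next, the hypothesis that $\mathbf u^{(1,\lambda)}$ has positive first and fourth coordinates is, by definition, the pair of inequalities
\[
\alpha_{11}>\alpha_{21}+\lambda\alpha_{31}
\quad\text{and}\quad
\alpha_{24}>\alpha_{14}+\lambda\alpha_{34}.
\]
These are the $\mu=1$ instances of the assumption in Theorem~\ref{thm:mu-lambda}.

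All hypotheses of Theorem~\ref{thm:mu-lambda} therefore hold with $\mu=1$. The theorem then gives that $\mathbf u^{(1,\lambda)}=\mathbf c_1+\mathbf c_2+\lambda\mathbf c_3$ has full support and is not a neighbor of the origin in $L$.

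There is no substantive obstacle; the only point to check is that $\lambda\ge3$ matches $\lambda\ge\mu+2$ when $\mu=1$.

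For a self-contained argument, one can instead repeat the witness used in the theorem. Apply Lemma~\ref{lem:neighbor} with $\mathbf b=\mathbf c_3$:
\begin{itemize}
\item On the first, third and fourth coordinates, $\mathbf c_3$ is negative, so it is bounded by the corresponding (nonnegative) coordinates of $(\mathbf u^{(1,\lambda)})^+$.
\item On the second coordinate we need $(\lambda-1)\alpha_{32}\ge\alpha_{12}+\alpha_{22}$. This holds because $2\alpha_{32}>\alpha_{12}+\alpha_{22}$ by the lower bound in~\textup{\textbf{(II)}}, and $\lambda-1\ge2$.
\end{itemize}
Finally, $\mathbf c_3\ne\mathbf 0$ since it is a basis vector, and $\mathbf c_3\ne\mathbf u^{(1,\lambda)}$ since their first coordinates have opposite signs. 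So $\mathbf c_3\in L\setminus\{\mathbf0,\mathbf u^{(1,\lambda)}\}$ with $\mathbf c_3\le(\mathbf u^{(1,\lambda)})^+$, and Lemma~\ref{lem:neighbor} shows $\mathbf u^{(1,\lambda)}$ is not a neighbor of the origin.
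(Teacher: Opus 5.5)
Your proposal is correct and matches the paper. The paper obtains this corollary by setting $\mu=1$ in Theorem~\ref{thm:mu-lambda}, where $\lambda\ge\mu+2$ becomes $\lambda\ge3$. Your self-contained alternative simply reproduces that theorem's proof, using the same witness $\mathbf b=\mathbf c_3$ and the bound $2\alpha_{32}>\alpha_{12}+\alpha_{22}$.
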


The preceding corollary restricts the possible values of $\lambda$ for
neighbors of the form $\mathbf u^{(1,\lambda)}$.

\begin{corollary}\label{cor:lambda}
Let $\{\mathbf c_1,\mathbf c_2,\mathbf c_3\}$ be an admissible basis
with $\alpha_{32}>\max(\alpha_{12},\alpha_{22})$, and let
$\lambda\ge1$ be an integer.
If $\mathbf u^{(1,\lambda)}$ has positive first and fourth coordinates
and is a neighbor of the origin, then $\lambda\le2$.
\end{corollary}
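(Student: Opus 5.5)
This is the contrapositive of Corollary~\ref{cor:lambda-general}, which is itself the case $\mu=1$ of Theorem~\ref{thm:mu-lambda}. The plan is to argue by contradiction and reduce directly to that theorem.

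Suppose $\mathbf u^{(1,\lambda)}$ has positive first and fourth coordinates, is a neighbor of the origin, and $\lambda\ge3$. Setting $\mu=1$, the condition $\lambda\ge3$ is exactly $\lambda\ge\mu+2$. The hypothesis $\alpha_{32}>\max(\alpha_{12},\alpha_{22})$ is the lower half of condition~\textbf{(II)}. By the Remark following Theorem~\ref{thm:mu-lambda}, this lower half is the only part of \textbf{(II)} used in that proof. So Theorem~\ref{thm:mu-lambda} applies and shows that $\mathbf u^{(1,\lambda)}$ is not a neighbor of the origin, a contradiction. Hence $\lambda\le2$.

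To make the argument self-contained without leaning on the Remark, I would repeat the key step with $\mu=1$. The aim is to show that $\mathbf c_3\in L\setminus\{\mathbf0,\mathbf u^{(1,\lambda)}\}$ and $\mathbf c_3\le(\mathbf u^{(1,\lambda)})^+$, and then invoke Lemma~\ref{lem:neighbor}. The coordinates go as follows.
\begin{itemize}
\item First and fourth: $\mathbf c_3$ has negative entries there, while $(\mathbf u^{(1,\lambda)})^+$ has positive entries by hypothesis.
\item Third: both $(\mathbf c_3)_3$ and $(\mathbf u^{(1,\lambda)})_3$ are negative, so the comparison is $-\alpha_{33}\le0=(\mathbf u^{(1,\lambda)})^+_3$.
\item Second: the needed inequality is $(\lambda-1)\alpha_{32}\ge\alpha_{12}+\alpha_{22}$. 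It follows from $(\lambda-1)\alpha_{32}\ge2\alpha_{32}>\alpha_{12}+\alpha_{22}$, since $\alpha_{32}$ exceeds both $\alpha_{12}$ and $\alpha_{22}$. The same bound gives $(\mathbf u^{(1,\lambda)})_2=\lambda\alpha_{32}-\alpha_{12}-\alpha_{22}>\alpha_{32}>0$, so the positive part in this coordinate really is $(\mathbf u^{(1,\lambda)})_2$.
\end{itemize}
Finally, $\mathbf c_3\neq\mathbf u^{(1,\lambda)}$ because their first coordinates have opposite signs.

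I expect no real obstacle. The only point needing care is confirming that the second coordinate of $\mathbf u^{(1,\lambda)}$ is positive, so that the positive part is computed correctly. That is handled by the inequality above.
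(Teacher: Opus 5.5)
Your proposal is correct and follows the paper's own proof: assume $\lambda\ge3$ and apply Corollary~\ref{cor:lambda-general}, the $\mu=1$ case of Theorem~\ref{thm:mu-lambda}, to reach a contradiction. Your self-contained check that $\mathbf c_3\le(\mathbf u^{(1,\lambda)})^+$ is just that theorem's proof specialized to $\mu=1$. Your appeal to the Remark is harmless but unnecessary, since an admissible basis already satisfies condition~\textbf{(II)} in full.
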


\begin{proof}
Suppose, to the contrary, that $\lambda\ge3$.
Then Corollary~\ref{cor:lambda-general} implies that
$\mathbf u^{(1,\lambda)}$ cannot be a neighbor of the origin,
contradicting the hypothesis.
Therefore $\lambda\le2$.
\end{proof}

Theorem~\ref{thm:mu-lambda} shows that, when $\mu=1$, a necessary
condition for $\mathbf u^{(1,\lambda)}$ to be a neighbor of the origin
is $\lambda\le2$. The next theorem proves that this condition is also
sufficient.

\begin{theorem}\label{thm:converse}
Let $\{\mathbf c_1,\mathbf c_2,\mathbf c_3\}$ be an admissible basis
satisfying conditions~\textup{\textbf{(I)}},
\textup{\textbf{(II)}}, and~\textup{\textbf{(III)}},
and let $\lambda\in\{1,2\}$.
If $\mathbf u^{(1,\lambda)}$ has positive first and fourth
coordinates, then $\mathbf u^{(1,\lambda)}$ is a neighbor of the
origin in $L$.
\end{theorem}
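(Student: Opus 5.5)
The plan is to apply Lemma~\ref{lem:neighbor} to $\mathbf u=\mathbf u^{(1,\lambda)}$. First I compute $\mathbf u^+$ in each case. The third coordinate is negative. By hypothesis the first and fourth coordinates $p_1=\alpha_{11}-\alpha_{21}-\lambda\alpha_{31}$ and $p_4=\alpha_{24}-\alpha_{14}-\lambda\alpha_{34}$ are positive. By condition~\textbf{(II)}, the second coordinate is $\alpha_{32}-\alpha_{12}-\alpha_{22}<0$ when $\lambda=1$ and $p_2=2\alpha_{32}-\alpha_{12}-\alpha_{22}>0$ when $\lambda=2$. So $\mathbf u^+=(p_1,0,0,p_4)$ for $\lambda=1$ and $\mathbf u^+=(p_1,p_2,0,p_4)$ for $\lambda=2$. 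Suppose, for a contradiction, that $\mathbf b=x\mathbf c_1+y\mathbf c_2+z\mathbf c_3\in L\setminus\{\mathbf0,\mathbf u\}$ satisfies $\mathbf b\le\mathbf u^+$, with $x,y,z\in\mathbb Z$. The goal is to show that no such integer triple exists.

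Positivity of $L$ gives two extra sign constraints. Since $-\mathbf b\notin\mathbb N^4\setminus\{\mathbf0\}$, some coordinate of $\mathbf b$ is positive, necessarily among coordinates $1,2,4$. Applying the same to $\mathbf u-\mathbf b\neq\mathbf0$, some coordinate of $\mathbf u-\mathbf b$ must be negative. From $\mathbf b\le\mathbf u^+$ we get $(\mathbf u-\mathbf b)_i\ge0$ wherever $u_i>0$. Hence $\mathbf u-\mathbf b$ is negative only in coordinate $3$, or, when $\lambda=1$, possibly coordinate $2$. Together with $b_3=-x\alpha_{13}-y\alpha_{23}-z\alpha_{33}\le0$, this gives the working system of linear inequalities in $(x,y,z)$:
\[
x\alpha_{11}-y\alpha_{21}-z\alpha_{31}\le p_1,\qquad
-x\alpha_{14}+y\alpha_{24}-z\alpha_{34}\le p_4,\qquad
-x\alpha_{12}-y\alpha_{22}+z\alpha_{32}\le p_2 \ (\text{or }0).
\]

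The key step is to pin down $x$ and $y$ from the first and fourth inequalities, splitting on the signs of $x$ and $y$. If $x\ge2$ and $y\le x$, the first coordinate exceeds $p_1$ once condition~\textbf{(I)} is used, unless $z$ is large positive. But a large positive $z$ forces the second coordinate of $\mathbf b$ to exceed $p_2$ by condition~\textbf{(II)}. The case $y\ge2$ is symmetric, using the fourth coordinate and condition~\textbf{(III)}. Negative values of $x$ or $y$ are excluded by requiring some positive coordinate of $\mathbf b$, combined with $b_3\le0$. I expect this to reduce to $x,y\in\{0,1\}$.

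Then for each of the four pairs $(x,y)$ I bound $z$. The second-coordinate inequality $z\alpha_{32}\le p_2+x\alpha_{12}+y\alpha_{22}$ gives an upper bound via $\alpha_{32}>\max(\alpha_{12},\alpha_{22})$. The first and fourth inequalities, together with the positivity constraints above, give a lower bound. Thus only finitely many candidates remain, essentially the vectors $\mathbf u_1,\dots,\mathbf u_7$ and their negatives. Each is excluded directly: either it equals $\mathbf u$, or it has a coordinate exceeding $\mathbf u^+$. For example, $\mathbf c_1$ has first coordinate $\alpha_{11}>p_1$. Also $\mathbf c_1+\mathbf c_3$ has first coordinate $\alpha_{11}-\alpha_{31}>p_1$ when $\lambda=2$, and for $\lambda=1$ its second coordinate $\alpha_{32}-\alpha_{12}>0$ exceeds $0$. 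The upper bound $\alpha_{32}<\alpha_{12}+\alpha_{22}$ in~\textbf{(II)} is needed exactly here, to make the second coordinate of $\mathbf u$ negative when $\lambda=1$ and to rule out $z=\lambda+1$.

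The main obstacle is the first reduction to $x,y\in\{0,1\}$. The three inequalities are coupled through $z$, so the bound on $x$ cannot be read off from the first coordinate alone. I would first try eliminating $z$ by adding suitable multiples of the first and second inequalities, using $\alpha_{31}\ge1$ and $\alpha_{32}>\alpha_{12}$. If that fails, I would do a direct case split on $z\le\lambda$ versus $z>\lambda$.
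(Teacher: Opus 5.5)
Your framework matches the paper's: apply Lemma~\ref{lem:neighbor}, write $\mathbf b=x\mathbf c_1+y\mathbf c_2+z\mathbf c_3$, and show that the integer system coming from $\mathbf b\le\mathbf u^+$ has no solutions other than $\mathbf b=\mathbf 0$ and $\mathbf b=\mathbf u$. Your computation of $\mathbf u^+$ is right, including $(\mathbf u^+)_2=0$ for $\lambda=1$.

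The gap is that this case analysis \emph{is} the proof, and you have not carried it out. The reduction to $x,y\in\{0,1\}$ is only expected, and the coupling through $z$ is explicitly left open. Your heuristic for $\max(x,y)\ge2$ can be made precise: for $x\ge2$ and $y\le x$, the first coordinate forces $z\ge 2x-1+\lambda$, while the second forces $z\le 2x-2+\lambda$.

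Your exclusion of negative coefficients, however, rests on a false claim. A positive coordinate of $\mathbf b$ together with $b_3\le0$ does not rule out $x<0$. In $L(m)$ the vector $-\mathbf c_1+\mathbf c_3=(-(m+5),\,2m+4,\,-m,\,0)$ has both properties, and it is excluded only by the bound $b_2\le(\mathbf u^+)_2$. So after the $\max(x,y)\ge2$ step, the infinite region $x,y\le1$ with $\min(x,y)<0$ is still open. It has to be handled by playing the upper bounds in coordinates $1,2,4$ against one another, together with $b_3\le0$, and that is where the work lies.

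The paper does exactly this. Set $p=x-1$, $q=y-1$, $r=z-\lambda$. Then coordinates $1$ and $4$ give $p\alpha_{11}\le q\alpha_{21}+r\alpha_{31}$ and $q\alpha_{24}\le p\alpha_{14}+r\alpha_{34}$, and one splits on the signs of $(p,q,r)$:
\begin{itemize}
\item each mixed sign pattern contradicts one or two of the inequalities (e.g.\ $r<q$ from coordinate $2$ against $q<r$ from coordinate $4$);
\item the pattern $p,q>0$ is handled by eliminating $r$ via coordinate $2$ and multiplying the two resulting inequalities, which is your \emph{eliminate $z$} idea;
\item the pattern $p,q,r<0$ is handled by $b_3\le0$, together with $\alpha_{32}<\alpha_{12}+\alpha_{22}$ when $\lambda=2$ and $r=-1$.
\end{itemize}

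If you adopt this route, keep your own second-coordinate constraint. The paper uses $r\alpha_{32}\le p\alpha_{12}+q\alpha_{22}$, i.e.\ $b_2\le u_2$, which follows from $\mathbf b\le\mathbf u^+$ only when $\lambda=2$. For $\lambda=1$ one only has $r\alpha_{32}\le p\alpha_{12}+q\alpha_{22}+\delta$ with $\delta=\alpha_{12}+\alpha_{22}-\alpha_{32}\in(0,\min(\alpha_{12},\alpha_{22}))$. This slack does no harm:
\begin{itemize}
\item whenever $p<0$ or $q<0$, it is absorbed by the corresponding negative term;
\item for $p,q\ge0$ it still yields $r\le p+q$, whence $p(\alpha_{11}-\alpha_{31})\le q(\alpha_{21}+\alpha_{31})$ and $q(\alpha_{24}-\alpha_{34})\le p(\alpha_{14}+\alpha_{34})$. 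By \textbf{(I)} and \textbf{(III)} these force $p=q=0$, and then $r=0$.
\end{itemize}
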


\begin{proof} By Lemma~\ref{lem:neighbor}, it suffices to prove that there is no
$
\mathbf b\in
L\setminus\{\mathbf0,\mathbf u^{(1,\lambda)}\}$
satisfying $
\mathbf b\le(\mathbf u^{(1,\lambda)})^+.$ Assume, to the contrary, that such a vector $\mathbf b$ exists.
Write $\mathbf b=\mu\mathbf c_1+\nu\mathbf c_2+\rho\mathbf c_3$, where $\mu,\nu,\rho\in\mathbb Z$, and define
$p=\mu-1$, $q=\nu-1$, and $r=\rho-\lambda$.
Since $\{\mathbf c_1,\mathbf c_2,\mathbf c_3\}$ is a
$\mathbb Z$-basis of $L$, every element of $L$
has a unique representation with respect to this basis. Hence $\mathbf b=\mathbf u^{(1,\lambda)}$ if and only if $(\mu,\nu,\rho)=(1,1,\lambda)$. By the definitions $p=\mu-1$,
$q=\nu-1$, and 
$r=\rho-\lambda$.
Thus $\mathbf b=\mathbf u^{(1,\lambda)}$ if and only if $(p,q,r)=(0,0,0).$
Similarly, $\mathbf b=\mathbf0$ if and only if $(p,q,r)=(-1,-1,-\lambda)$. 
Since
$
\mathbf b\notin
\{\mathbf0,\mathbf u^{(1,\lambda)}\},$
it follows that
$(p,q,r)\neq(0,0,0)$ and 
$
(p,q,r)\neq(-1,-1,-\lambda)$.

The condition
$
\mathbf b\le(\mathbf u^{(1,\lambda)})^+$
is equivalent to the corresponding coordinatewise inequalities.
Comparing the first, fourth, and second coordinates yields
\begin{align}
p\alpha_{11}&\le q\alpha_{21}+r\alpha_{31},\label{eq:A}\\
q\alpha_{24}&\le p\alpha_{14}+r\alpha_{34},\label{eq:B}\\
r\alpha_{32}&\le p\alpha_{12}+q\alpha_{22}.\label{eq:C}
\end{align}
Since the third coordinate of $\mathbf u^{(1,\lambda)}$ is negative,
the third coordinate of $(\mathbf u^{(1,\lambda)})^+$ is $0$. Therefore the third
coordinate of $\mathbf b$ is nonpositive, that is,
\[
-\mu\alpha_{13}-\nu\alpha_{23}-\rho\alpha_{33}\le0,
\]
which is equivalent to
\begin{equation}\label{eq:D}
(p+1)\alpha_{13}+(q+1)\alpha_{23}+(r+\lambda)\alpha_{33}\ge0.
\end{equation}
We show that the system
\eqref{eq:A}--\eqref{eq:D}
has $(p,q,r)=(0,0,0)$ as its only integer solution by considering all
possible sign patterns of $(p,q,r)$.

\begin{enumerate}
\item[\textbf{Case 1.}] \textit{$p\geq0$, $q \geq 0$, and $r \geq 0$.} We distinguish the following cases.
\begin{enumerate}
\item[(a)] $p=q=0$ and $r>0$. Then \eqref{eq:C} becomes $r\alpha_{32}\le0,$
which is impossible because $r>0$ and $\alpha_{32}>0$.

\item[(b)] $p=0$ and $q>0$. If $r=0$, then \eqref{eq:B} becomes
$
q\alpha_{24}\le0,$
which is impossible since $q>0$ and $\alpha_{24}>0$.

Suppose now that $r>0$.
Then \eqref{eq:B} and \eqref{eq:C} give
$
q\alpha_{24}\le r\alpha_{34}$ and $r\alpha_{32}\le q\alpha_{22}.$
Since $q>0$ and $r>0$, multiplying these inequalities yields
$
\alpha_{24}\alpha_{32}
\le
\alpha_{22}\alpha_{34}.$
On the other hand,
$\alpha_{24}>\alpha_{34}$ by condition~\textup{\textbf{(III)}}
and
$\alpha_{32}>\alpha_{22}$ by condition~\textup{\textbf{(II)}}.
Hence
$
\alpha_{24}\alpha_{32}
>
\alpha_{22}\alpha_{34},$
a contradiction.

\item[(c)] $q=0$ and $p>0$.

If $r=0$, then \eqref{eq:A} becomes
$
p\alpha_{11}\le0,$
which is impossible since $p>0$ and $\alpha_{11}>0$.

Suppose now that $r>0$.
Then \eqref{eq:A} and \eqref{eq:C} give
$
p\alpha_{11}\le r\alpha_{31}$ and $r\alpha_{32}\le p\alpha_{12}$.
Since $p>0$ and $r>0$, multiplying these inequalities yields
$
\alpha_{11}\alpha_{32}
\le
\alpha_{12}\alpha_{31}.$
On the other hand,
$\alpha_{11}>2\alpha_{31}$ by
condition~\textup{\textbf{(I)}}
and
$\alpha_{32}>\alpha_{12}$ by
condition~\textup{\textbf{(II)}}.
Hence
\[
\alpha_{11}\alpha_{32}
>
2\alpha_{31}\alpha_{12}
>
\alpha_{12}\alpha_{31},
\]
a contradiction.

\item[(d)] $p>0$ and $q>0$. From~\eqref{eq:C},
\[
r\le\frac{p\alpha_{12}+q\alpha_{22}}{\alpha_{32}}.
\]
Substituting this upper bound for $r$ into the right-hand sides of
\eqref{eq:A} and \eqref{eq:B} yields
\begin{align}
p(\alpha_{11}\alpha_{32}-\alpha_{12}\alpha_{31})
&\le
q(\alpha_{21}\alpha_{32}+\alpha_{22}\alpha_{31}),
\label{eq:pq1}\\
q(\alpha_{24}\alpha_{32}-\alpha_{22}\alpha_{34})
&\le
p(\alpha_{14}\alpha_{32}+\alpha_{12}\alpha_{34}).
\label{eq:pq2}
\end{align}

By condition~\textup{\textbf{(I)}},
$
\alpha_{11}>\alpha_{21}+2\alpha_{31},$
and by condition~\textup{\textbf{(II)}},
$
\alpha_{32}>\max(\alpha_{12},\alpha_{22}).$
Hence
\begin{align*}
\alpha_{11}\alpha_{32}-\alpha_{12}\alpha_{31}
&>
(\alpha_{21}+2\alpha_{31})\alpha_{32}-\alpha_{12}\alpha_{31} \\
&=
\alpha_{21}\alpha_{32}
+\alpha_{31}(2\alpha_{32}-\alpha_{12}) \\
&>
\alpha_{21}\alpha_{32}
+\alpha_{22}\alpha_{31},
\end{align*}
where the last inequality follows from
$
2\alpha_{32}
>
\alpha_{12}+\alpha_{22}.$

Similarly, condition~\textup{\textbf{(III)}} gives
$
\alpha_{24}>\alpha_{14}+2\alpha_{34},$
and therefore
\begin{align*}
\alpha_{24}\alpha_{32}-\alpha_{22}\alpha_{34}
&>
(\alpha_{14}+2\alpha_{34})\alpha_{32}
-\alpha_{22}\alpha_{34} \\
&=
\alpha_{14}\alpha_{32}
+\alpha_{34}(2\alpha_{32}-\alpha_{22}) \\
&>
\alpha_{14}\alpha_{32}
+\alpha_{12}\alpha_{34},
\end{align*}
where the last inequality again follows from
$
2\alpha_{32}
>
\alpha_{12}+\alpha_{22}.$

Since
\[
\alpha_{11}\alpha_{32}-\alpha_{12}\alpha_{31}
>
\alpha_{21}\alpha_{32}+\alpha_{22}\alpha_{31}>0
\]
and
\[
\alpha_{24}\alpha_{32}-\alpha_{22}\alpha_{34}
>
\alpha_{14}\alpha_{32}+\alpha_{12}\alpha_{34}>0,
\]
we obtain
\[
(\alpha_{11}\alpha_{32}-\alpha_{12}\alpha_{31})
(\alpha_{24}\alpha_{32}-\alpha_{22}\alpha_{34})
>
(\alpha_{21}\alpha_{32}+\alpha_{22}\alpha_{31})
(\alpha_{14}\alpha_{32}+\alpha_{12}\alpha_{34}).
\]
On the other hand, multiplying
\eqref{eq:pq1} and \eqref{eq:pq2}, which is valid since
$p>0$ and $q>0$, gives
\[
(\alpha_{11}\alpha_{32}-\alpha_{12}\alpha_{31})
(\alpha_{24}\alpha_{32}-\alpha_{22}\alpha_{34})
\le
(\alpha_{21}\alpha_{32}+\alpha_{22}\alpha_{31})
(\alpha_{14}\alpha_{32}+\alpha_{12}\alpha_{34}),
\]
a contradiction.

\end{enumerate}

\item[\textbf{Case 2.}] \textit{$p\ge0$, $q\ge0$, and $r<0$.} Since $r<0$ and $\alpha_{31}>0$, inequality~\eqref{eq:A} yields
\[
p\alpha_{11}
\le
q\alpha_{21}+r\alpha_{31}
<
q\alpha_{21}.
\]
As $\alpha_{11}>\alpha_{21}>0$, it follows that $p<q$.

Similarly, since $r<0$ and $\alpha_{34}>0$, inequality~\eqref{eq:B}
gives
\[
q\alpha_{24}
\le
p\alpha_{14}+r\alpha_{34}
<
p\alpha_{14}.
\]
As $\alpha_{24}>\alpha_{14}>0$, we obtain $q<p$, contradicting the inequality $p<q$.

\item[\textbf{Case 3.}] \textit{$p<0$, $q\geq0$, and $r\geq0$.} Since $p<0$ and $\alpha_{12}>0$, inequality~\eqref{eq:C} gives
\[
r\alpha_{32}
\le
p\alpha_{12}+q\alpha_{22}
<
q\alpha_{22}.
\]
As $\alpha_{32}>\alpha_{22}>0$, we obtain $r<q$.

Similarly, since $p<0$ and $\alpha_{14}>0$, inequality~\eqref{eq:B}
gives
\[
q\alpha_{24}
\le
p\alpha_{14}+r\alpha_{34}
<
r\alpha_{34}.
\]
As $\alpha_{24}>\alpha_{34}>0$, we obtain $q<r$, a contradiction.

\item[\textbf{Case 4.}] \textit{$p\geq0$, $q<0$, and $r\geq0$.} Since $q<0$ and $\alpha_{22}>0$, inequality~\eqref{eq:C} gives
\[
r\alpha_{32}
\le
p\alpha_{12}+q\alpha_{22}
<
p\alpha_{12}.
\]
As $\alpha_{32}>\alpha_{12}>0$, we obtain $r<p$.

Similarly, since $q<0$ and $\alpha_{21}>0$, inequality~\eqref{eq:A}
gives
\[
p\alpha_{11}
\le
q\alpha_{21}+r\alpha_{31}
<
r\alpha_{31}.
\]
As $\alpha_{11}>\alpha_{31}>0$, we obtain $p<r$, a contradiction.

\item[\textbf{Case 5.}] \textit{$p<0$, $q<0$, and $r\geq0$.} Since $p<0$, $q<0$, $\alpha_{12}>0$, and $\alpha_{22}>0$, inequality~\eqref{eq:C} gives
\[
r\alpha_{32}
\le
p\alpha_{12}+q\alpha_{22}
<
0.
\]
As $\alpha_{32}>0$, it follows that $r<0$, contradicting the
assumption that $r\ge0$.

\item[\textbf{Case 6.}] \textit{$p<0$, $q\ge0$, and $r<0$.} Since $p<0$, $r<0$, $\alpha_{14}>0$, and $\alpha_{34}>0$, inequality~\eqref{eq:B} gives
\[
q\alpha_{24}
\le
p\alpha_{14}+r\alpha_{34}
<
0.
\]
As $\alpha_{24}>0$, it follows that $q<0$, contradicting the
assumption that $q\ge0$.

\item[\textbf{Case 7.}] \textit{$p\ge0$, $q<0$, and $r<0$.} Since $q<0$, $r<0$, $\alpha_{21}>0$, and $\alpha_{31}>0$, inequality~\eqref{eq:A} gives
\[
p\alpha_{11}
\le
q\alpha_{21}+r\alpha_{31}
<
0.
\]
As $\alpha_{11}>0$, it follows that $p<0$, contradicting the
assumption that $p\ge0$.

\item[\textbf{Case 8.}]  \textit{$p<0$, $q<0$, and $r<0$.} Then $\mu=p+1\leq0$, $\nu=q+1\leq0$, and $\rho=r+\lambda\leq\lambda-1$. We distinguish the following cases.
\begin{enumerate}
\item[(a)] $\lambda=1$. Then $\rho\le0$. Since $\mu\le0$, $\nu\le0$, and $\rho\le0$, while
$\alpha_{13}>0$, $\alpha_{23}>0$, and $\alpha_{33}>0$, each of the
products $\mu\alpha_{13}$, $\nu\alpha_{23}$, and
$\rho\alpha_{33}$ is nonpositive. By~\eqref{eq:D},
\[
\mu\alpha_{13}
+\nu\alpha_{23}
+\rho\alpha_{33}
\ge0.
\]
Since each of the three summands is nonpositive, every summand must
be equal to zero. Therefore,
\[
\mu\alpha_{13}
=
\nu\alpha_{23}
=
\rho\alpha_{33}
=
0.
\]
As $\alpha_{13}$, $\alpha_{23}$, and $\alpha_{33}$ are positive, it
follows that
\[
\mu=\nu=\rho=0.
\]
Hence
$
(p,q,r)=(-1,-1,-1),$
contradicting the fact that
$
(p,q,r)\neq(-1,-1,-\lambda).$

\item[(b)] $\lambda=2$ and $r\le-2$. Then $\rho=r+2\le0.$ Thus
$\mu\le0$, $\nu\le0$, and $\rho\le0$. Arguing exactly as in
case~\textup{(a)}, equation~\eqref{eq:D} again shows that
\[
\mu=\nu=\rho=0.
\]
Hence
$
(p,q,r)=(-1,-1,-2),$
contradicting the assumption that
$
(p,q,r)\neq(-1,-1,-\lambda).$

\item[(c)] $\lambda=2$ and $r=-1$. Then $\rho=r+2=1$. Since $p<0$ and $q<0$, we have
$p\le-1$ and $q\le-1$. Therefore, inequality~\eqref{eq:C} gives
\[
-\alpha_{32}
=
r\alpha_{32}
\le
p\alpha_{12}+q\alpha_{22}
\le
-\alpha_{12}-\alpha_{22}.
\]
Hence
$
\alpha_{12}+\alpha_{22}\le\alpha_{32},$
contradicting the inequality
$
\alpha_{32}<\alpha_{12}+\alpha_{22}$
from condition~\textup{\textbf{(II)}}.
\end{enumerate}
\end{enumerate}

Every possible sign pattern for $(p,q,r)$ leads to a contradiction.
Therefore the system
\eqref{eq:A}--\eqref{eq:D}
has $(p,q,r)=(0,0,0)$ as its only integer solution.
Hence there is no
$
\mathbf b\in
L\setminus
\{\mathbf0,\mathbf u^{(1,\lambda)}\}$
satisfying
$
\mathbf b\le
(\mathbf u^{(1,\lambda)})^+.$
By Lemma~\ref{lem:neighbor},
$\mathbf u^{(1,\lambda)}$
is a neighbor of the origin.
\end{proof}

Combining Corollary~\ref{cor:lambda} with
Theorem~\ref{thm:converse}, we obtain the following characterization.

\begin{corollary}\label{cor:iff}
Let $\{\mathbf c_1,\mathbf c_2,\mathbf c_3\}$ be an admissible basis
satisfying conditions~\textup{\textbf{(I)}},
\textup{\textbf{(II)}}, and~\textup{\textbf{(III)}},
and let $\lambda\ge1$ be an integer.
Assume that $\mathbf u^{(1,\lambda)}$ has positive first and fourth
coordinates. Then $\mathbf u^{(1,\lambda)}$ is a neighbor of the origin
if and only if $\lambda\le2$.
\end{corollary}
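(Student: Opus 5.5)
The statement splits into its two implications, and each follows directly from a result already proved in this section. I would argue them separately.

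For the forward implication, assume that $\mathbf u^{(1,\lambda)}$ has positive first and fourth coordinates and is a neighbor of the origin. An admissible basis satisfying (I), (II) and (III) in particular satisfies the lower bound $\alpha_{32}>\max(\alpha_{12},\alpha_{22})$ from condition (II). So the hypotheses of Corollary~\ref{cor:lambda} hold, and it gives $\lambda\le2$. Equivalently, by the contrapositive of Corollary~\ref{cor:lambda-general}, the case $\lambda\ge3$ is excluded: that is exactly Theorem~\ref{thm:mu-lambda} with $\mu=1$, where the lattice vector $\mathbf c_3$ lies coordinatewise below $(\mathbf u^{(1,\lambda)})^+$.

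For the reverse implication, assume $\lambda\le2$. Since $\lambda\ge1$ by hypothesis, we have $\lambda\in\{1,2\}$. Together with conditions (I), (II), (III) and the positivity of the first and fourth coordinates, this is precisely the setting of Theorem~\ref{thm:converse}, which yields that $\mathbf u^{(1,\lambda)}$ is a neighbor of the origin.

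There is no real obstacle here. The only points to check are that the hypotheses match: the integer range $\lambda\ge1$ combined with $\lambda\le2$ gives exactly $\{1,2\}$, and the admissibility assumption supplies both the lower bound in (II) needed for Corollary~\ref{cor:lambda} and the full conditions (I)–(III) needed for Theorem~\ref{thm:converse}. All the substantive work, namely the sign-pattern case analysis, is already contained in those two results.
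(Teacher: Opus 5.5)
Your proposal is correct and is exactly the paper's argument: necessity comes from Corollary~\ref{cor:lambda} (Theorem~\ref{thm:mu-lambda} with $\mu=1$, where $\mathbf c_3\le(\mathbf u^{(1,\lambda)})^+$), and sufficiency comes from Theorem~\ref{thm:converse} once $1\le\lambda\le2$ forces $\lambda\in\{1,2\}$. As an aside that does not affect your argument, the paper's proof of Theorem~\ref{thm:converse} applies \eqref{eq:C} also when $\lambda=1$, even though (II) gives $(\mathbf u^{(1,1)})_2<0$; so for $\mathbf b=\mu\mathbf c_1+\nu\mathbf c_2+\rho\mathbf c_3$ only the weaker bound $\rho\alpha_{32}\le\mu\alpha_{12}+\nu\alpha_{22}$ is actually available, but the case analysis can be redone with this bound using (I)--(III), e.g.\ $2\alpha_{32}>\alpha_{12}+\alpha_{22}$, so the theorem you cite remains valid.
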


\begin{proof}
The necessity follows from Corollary~\ref{cor:lambda}, whereas the
sufficiency is precisely the content of
Theorem~\ref{thm:converse}.
\end{proof}

The preceding characterization concerns the case $\mu=1$.
For arbitrary $\mu$, Theorem~\ref{thm:mu-lambda} yields the following
necessary condition.

\begin{corollary}\label{cor:mu-lambda}
Let $\{\mathbf c_1,\mathbf c_2,\mathbf c_3\}$ be an admissible basis
with $\alpha_{32}>\max(\alpha_{12},\alpha_{22})$, and let $\mu\ge1$
and $\lambda\ge1$ be integers.
If $\mathbf u^{(\mu,\lambda)}$ has positive first and
fourth coordinates and
is a neighbor of the origin, then $\lambda\le\mu+1$.
\end{corollary}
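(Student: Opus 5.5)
This is the contrapositive of Theorem~\ref{thm:mu-lambda}, so I will argue by contradiction and invoke that theorem. Let $\mu\ge1$ and $\lambda\ge1$ be integers. Suppose $\mathbf u^{(\mu,\lambda)}$ has positive first and fourth coordinates and is a neighbor of the origin, and assume, to the contrary, that $\lambda>\mu+1$.

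Since $\lambda$ and $\mu$ are integers, $\lambda>\mu+1$ is the same as $\lambda\ge\mu+2$. Next I check the hypotheses of Theorem~\ref{thm:mu-lambda}:
\begin{itemize}
\item the basis is admissible;
\item it satisfies $\alpha_{32}>\max(\alpha_{12},\alpha_{22})$, which the Remark following Theorem~\ref{thm:mu-lambda} identifies as the only part of condition~\textbf{(II)} used in its proof;
\item $\mu\ge1$ and $\lambda\ge\mu+2$;
\item $\mathbf u^{(\mu,\lambda)}$ has positive first and fourth coordinates.
\end{itemize}
Theorem~\ref{thm:mu-lambda} then gives that $\mathbf u^{(\mu,\lambda)}$ is not a neighbor of the origin in $L$, which contradicts the assumption. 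Hence $\lambda\le\mu+1$.

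The only step needing care is that the corollary assumes just the lower bound in \textbf{(II)}, whereas Theorem~\ref{thm:mu-lambda} is stated under the full condition~\textbf{(II)}. I will resolve this through the Remark: the proof of Theorem~\ref{thm:mu-lambda} uses only $\alpha_{32}>\max(\alpha_{12},\alpha_{22})$. That inequality yields $u_2^{(\mu,\lambda)}>0$, and, together with $(\lambda-1)\alpha_{32}\ge(\mu+1)\alpha_{32}>\mu\alpha_{12}+\alpha_{22}$, it gives $\mathbf c_3\le(\mathbf u^{(\mu,\lambda)})^+$, so Lemma~\ref{lem:neighbor} applies. If a more self-contained argument is wanted, I can repeat these lines directly under the weaker hypothesis.
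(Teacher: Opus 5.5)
Your proposal is correct and matches the paper's proof, which likewise just observes that $\lambda\ge\mu+2$ would contradict Theorem~\ref{thm:mu-lambda}. Your detour through the Remark is harmless but unnecessary: an admissible basis already satisfies the full condition~\textbf{(II)} by definition, so the theorem applies directly.
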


\begin{proof}
If $\lambda\ge\mu+2$ then Theorem~\ref{thm:mu-lambda} gives a
contradiction. Hence $\lambda\le\mu+1$.
\end{proof}

Corollary~\ref{cor:iff} completes the characterization of the
neighbors of the form $\mathbf u^{(1,\lambda)}$ by showing that
$\mathbf u^{(1,\lambda)}$ is a neighbor of the origin if and only if
$\lambda\le2$.
The following theorem shows that the situation changes completely for
$\mu\ge2$: no vector
$\mathbf u^{(\mu,\lambda)}$
having positive first and fourth coordinates is a neighbor of the
origin.

\begin{theorem}\label{thm:mu-never}
Let $\{\mathbf c_1,\mathbf c_2,\mathbf c_3\}$ be an admissible basis
satisfying conditions~\textup{\textbf{(I)}} and~\textup{\textbf{(II)}},
and let $\mu\ge2$ and $\lambda\ge1$ be integers.
If $\mathbf u^{(\mu,\lambda)}$ has positive first and
fourth coordinates,
then $\mathbf u^{(\mu,\lambda)}$ is not a neighbor of the origin.
\end{theorem}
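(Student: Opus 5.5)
The plan is to apply Lemma~\ref{lem:neighbor} directly. I will exhibit an explicit lattice vector $\mathbf b\in L\setminus\{\mathbf 0,\mathbf u^{(\mu,\lambda)}\}$ with $\mathbf b\le(\mathbf u^{(\mu,\lambda)})^+$. Removing one copy of $\mathbf c_1$ is the natural move, but $\mathbf u^{(\mu-1,\lambda)}$ itself fails: its fourth coordinate exceeds that of $\mathbf u^{(\mu,\lambda)}$ by $\alpha_{14}$. So I drop $\mathbf c_2$ altogether and also reduce the $\mathbf c_3$-coefficient. Set
\[
j=\max(\lambda-2,0),\qquad \mathbf b=(\mu-1)\mathbf c_1+j\,\mathbf c_3 .
\]
Since $\mu\ge2$ and $\{\mathbf c_1,\mathbf c_2,\mathbf c_3\}$ is a basis, $\mathbf b\neq\mathbf 0$. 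Also $\mathbf b\neq\mathbf u^{(\mu,\lambda)}$, because the $\mathbf c_2$-coefficients are $0$ and $1$, and representations in a basis are unique.

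The verification is coordinatewise, with $(\mathbf u^{(\mu,\lambda)})^+=(u_1,\max(u_2,0),0,u_4)$, where $u_1,u_4>0$ by hypothesis.

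For the third coordinate, $b_3=-(\mu-1)\alpha_{13}-j\alpha_{33}<0$.

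For the fourth coordinate, $b_4=-(\mu-1)\alpha_{14}-j\alpha_{34}<0<u_4$. This is why I keep $j\ge0$: a negative $j$ would make $b_4$ possibly positive and would require (III), which is not assumed.

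For the first coordinate, $u_1-b_1=\alpha_{11}-\alpha_{21}-(\lambda-j)\alpha_{31}$. Here $\lambda-j\le2$, so this is at least $\alpha_{11}-\alpha_{21}-2\alpha_{31}>0$ by (I).

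For the second coordinate, the case split is needed. If $\lambda\ge2$, then $j=\lambda-2$ and $u_2-b_2=2\alpha_{32}-\alpha_{12}-\alpha_{22}>0$ by the lower bound in (II); hence $b_2<u_2\le\max(u_2,0)$. If $\lambda=1$, then $j=0$ and $b_2=-(\mu-1)\alpha_{12}<0\le\max(u_2,0)$.

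The main point to get right is the choice of $\mathbf b$. The $\mathbf c_1$-coefficient must be dropped by exactly one, to keep the first coordinate under control via (I). The $\mathbf c_3$-coefficient must drop by two, not one: a drop of one would leave $u_2-b_2=\alpha_{32}-\alpha_{12}-\alpha_{22}<0$, and the inequality could fail when $u_2>0$. The $\mathbf c_2$-coefficient must vanish, to kill the large positive entry $\alpha_{24}$ in the fourth coordinate.

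With $\mathbf b\le(\mathbf u^{(\mu,\lambda)})^+$ established, Lemma~\ref{lem:neighbor} shows that $\mathbf u^{(\mu,\lambda)}$ is not a neighbor of the origin. Only (I) and the lower bound $\alpha_{32}>\max(\alpha_{12},\alpha_{22})$ from (II), through $2\alpha_{32}>\alpha_{12}+\alpha_{22}$, are used.
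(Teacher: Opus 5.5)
Your proof is correct, and it takes a genuinely different route from the paper's.

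The paper does not use a single witness. It first invokes Theorem~\ref{thm:mu-lambda}, whose witness is $\mathbf c_3$, to dispose of $\lambda\ge\mu+2$. For $\lambda\le\mu+1$ with $(\mu,\lambda)\neq(2,3)$ it takes $\mathbf b=\mathbf c_1$; this needs the estimate $(\mu-2)\alpha_{11}>(\lambda-2)\alpha_{31}$, obtained from $(\lambda-2)/(\mu-2)\le 2$ and $\alpha_{11}>2\alpha_{31}$. The exceptional pair $(2,3)$ is treated separately with $\mathbf b=\mathbf c_1+\mathbf c_3$.

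Your witness agrees with the paper's when $\mu=2$ and $\lambda\le 3$. Elsewhere it is chosen so that $\mathbf u^{(\mu,\lambda)}-\mathbf b=\mathbf c_1+\mathbf c_2+\min(\lambda,2)\,\mathbf c_3$, which is exactly $\mathbf u_6$ or $\mathbf u_7$. That is why your first- and second-coordinate comparisons collapse to inequalities independent of $(\mu,\lambda)$, coming from (I) and the lower bound in (II). Meanwhile $\mathbf b$ has negative third and fourth coordinates for free.

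What this buys you is a uniform, self-contained argument. It needs no appeal to Theorem~\ref{thm:mu-lambda}, no reduction to $\lambda\le\mu+1$, no special case, and no ratio estimate. The paper's version, in exchange, only uses the small vectors $\mathbf c_1$, $\mathbf c_3$, $\mathbf c_1+\mathbf c_3$ (themselves $\mathbf u_1,\mathbf u_3,\mathbf u_4$) and reuses an earlier theorem.

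One minor remark: the ``must'' claims in your last paragraph are heuristics about your particular shape of witness, not necessities. For example, the paper's $\mathbf b=\mathbf c_1$ works for $\mu\ge 3$ and $\lambda\le\mu+1$, even though it lowers the $\mathbf c_1$-coefficient by $\mu-1\ge 2$. These claims play no role in the logic of your proof.
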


\begin{proof}
By Theorem~\ref{thm:mu-lambda}, we may assume
$\lambda\le\mu+1$. Thus it suffices to consider pairs
$(\mu,\lambda)$ satisfying this inequality.

\begin{enumerate}

\item[\textbf{Case 1.}]
Suppose $(\mu,\lambda)\neq(2,3)$. We shall show that
$\mathbf c_1\in L\setminus\{\mathbf0,\mathbf u^{(\mu,\lambda)}\}$ and
$\mathbf c_1\le(\mathbf u^{(\mu,\lambda)})^+$.
Lemma~\ref{lem:neighbor} will then imply that
$\mathbf u^{(\mu,\lambda)}$ is not a neighbor of the origin.

Since
$\mathbf c_1=(\alpha_{11},-\alpha_{12},-\alpha_{13},-\alpha_{14})$,
it remains to verify the coordinatewise inequalities.

For the first coordinate, we require
$\alpha_{21}\le(\mu-1)\alpha_{11}-\lambda\alpha_{31}$.
By condition~\textup{\textbf{(I)}},
$\alpha_{21}<\alpha_{11}-2\alpha_{31}$.
Hence it is enough to establish
$(\mu-2)\alpha_{11}\ge(\lambda-2)\alpha_{31}$,
and we shall prove the stronger inequality
$(\mu-2)\alpha_{11}>(\lambda-2)\alpha_{31}$.

If $\lambda\le2$, then
$(\lambda-2)\alpha_{31}\le0\le(\mu-2)\alpha_{11}$,
since $\lambda-2\le0$ and $\mu\ge2$.

Assume now that $\lambda\ge3$. If $\mu=2$, then
$\lambda\le\mu+1=3$, so
$(\mu,\lambda)=(2,3)$, contrary to the assumption.
Hence $\mu\ge3$, and therefore
\[
\frac{\lambda-2}{\mu-2}
\le
\frac{\mu-1}{\mu-2}
=
1+\frac1{\mu-2}
\le2.
\]
Condition~\textup{\textbf{(I)}} yields
$\alpha_{11}>\alpha_{21}+2\alpha_{31}>2\alpha_{31}$,
since $\alpha_{21}>0$. Thus
\[\alpha_{11}>
2\alpha_{31}\ge
\frac{\lambda-2}{\mu-2}\alpha_{31},\]
and therefore
$(\mu-2)\alpha_{11}>(\lambda-2)\alpha_{31}$.

For the second coordinate,
$
-\alpha_{12}<0,$
while the second coordinate of $(\mathbf u^{(\mu,\lambda)})^+$ is
nonnegative.

For the third coordinate,
$
-\alpha_{13}<0,$ while the third coordinate of $(\mathbf u^{(\mu,\lambda)})^+$ is equal
to $0$, because the third coordinate of
$\mathbf u^{(\mu,\lambda)}$ is strictly negative.

For the fourth coordinate,
$
-\alpha_{14}<0,$
while the fourth coordinate of $(\mathbf u^{(\mu,\lambda)})^+$ is
positive.

Since
$\{\mathbf c_1,\mathbf c_2,\mathbf c_3\}$ is a basis of $L$,
we have
$\mathbf c_1\neq\mathbf0$.
Moreover,
$\mathbf c_1\neq\mathbf u^{(\mu,\lambda)}$
because the fourth coordinate of $\mathbf c_1$ is negative,
whereas the fourth coordinate of
$\mathbf u^{(\mu,\lambda)}$ is positive.

\item[\textbf{Case 2.}]
Suppose $(\mu,\lambda)=(2,3)$. We shall show that
$\mathbf c_1+\mathbf c_3
\in L\setminus\{\mathbf0,\mathbf u^{(2,3)}\}$ and
$\mathbf c_1+\mathbf c_3
\le(\mathbf u^{(2,3)})^+$.
Lemma~\ref{lem:neighbor} will then imply that
$\mathbf u^{(2,3)}$ is not a neighbor of the origin.

Since
$\mathbf c_1+\mathbf c_3
=
(\alpha_{11}-\alpha_{31},
\alpha_{32}-\alpha_{12},
-\alpha_{13}-\alpha_{33},
-\alpha_{14}-\alpha_{34})$,
we verify the coordinate inequalities.

For the first coordinate,
$\alpha_{11}-\alpha_{31}
\le
2\alpha_{11}-\alpha_{21}-3\alpha_{31}$
is equivalent to
$\alpha_{21}\le\alpha_{11}-2\alpha_{31}$,
which follows immediately from
condition~\textup{\textbf{(I)}}.

For the second coordinate, condition~\textup{\textbf{(II)}} implies that
$\alpha_{12}+\alpha_{22}<2\alpha_{32}$, and hence
$
\alpha_{32}-\alpha_{12}
<
3\alpha_{32}-2\alpha_{12}-\alpha_{22}.$

For the third coordinate,
$
-\alpha_{13}-\alpha_{33}<0,$
while the third coordinate of $(\mathbf u^{(2,3)})^+$ is equal to $0$,
because the third coordinate of $\mathbf u^{(2,3)}$ is strictly
negative.

For the fourth coordinate, $
-\alpha_{14}-\alpha_{34}<0,$
while the fourth coordinate of $(\mathbf u^{(2,3)})^+$ is positive.

Since
$\alpha_{11}>2\alpha_{31}>\alpha_{31}$,
we have
$\mathbf c_1+\mathbf c_3\neq\mathbf0$.
Moreover,
$\mathbf c_1+\mathbf c_3\neq\mathbf u^{(2,3)}$
because the fourth coordinate of
$\mathbf c_1+\mathbf c_3$
is negative,
whereas the fourth coordinate of
$\mathbf u^{(2,3)}$
is positive.
\end{enumerate}

In each case we have constructed a lattice vector
$\mathbf b\in L\setminus\{\mathbf0,\mathbf u^{(\mu,\lambda)}\}$
satisfying
$\mathbf b\le(\mathbf u^{(\mu,\lambda)})^+$.
Lemma~\ref{lem:neighbor} therefore implies that
$\mathbf u^{(\mu,\lambda)}$
is not a neighbor of the origin.
\end{proof}

The preceding results combine to yield a complete characterization of
the vectors $\mathbf u^{(\mu,\lambda)}$ having positive first and
fourth coordinates that are neighbors of the origin.

\begin{corollary}\label{cor:classification}
Let $\{\mathbf c_1,\mathbf c_2,\mathbf c_3\}$ be an admissible basis
satisfying conditions~\textup{\textbf{(I)}},
\textup{\textbf{(II)}}, and~\textup{\textbf{(III)}}.
Let $\mu$ and $\lambda$ be positive integers, and assume that
$\mathbf u^{(\mu,\lambda)}$ has positive first and fourth
coordinates. Then $\mathbf u^{(\mu,\lambda)}$ is a neighbor of the
origin if and only if
$\mu=1$ and 
$\lambda\le2$.
\end{corollary}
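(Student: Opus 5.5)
The corollary follows by assembling the results already proved in this section, split according to whether $\mu=1$ or $\mu\ge2$. Throughout, fix an admissible basis satisfying \textbf{(I)}, \textbf{(II)}, and \textbf{(III)}, and fix positive integers $\mu,\lambda$ such that $\mathbf u^{(\mu,\lambda)}$ has positive first and fourth coordinates.

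For sufficiency, suppose $\mu=1$ and $\lambda\le2$. Since $\lambda\ge1$, we have $\lambda\in\{1,2\}$. Theorem~\ref{thm:converse} applies directly, because its hypotheses are exactly \textbf{(I)}--\textbf{(III)} together with positivity of the first and fourth coordinates of $\mathbf u^{(1,\lambda)}$. It gives that $\mathbf u^{(1,\lambda)}$ is a neighbor of the origin.

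For necessity, suppose $\mathbf u^{(\mu,\lambda)}$ is a neighbor of the origin. First, we show $\mu=1$. If $\mu\ge2$, then Theorem~\ref{thm:mu-never} applies: it needs only \textbf{(I)}, \textbf{(II)}, $\mu\ge2$, $\lambda\ge1$, and positive first and fourth coordinates. It says $\mathbf u^{(\mu,\lambda)}$ is not a neighbor, which is a contradiction. So $\mu=1$. Second, with $\mu=1$, Corollary~\ref{cor:lambda} gives $\lambda\le2$. Its only extra hypothesis is $\alpha_{32}>\max(\alpha_{12},\alpha_{22})$, which is the lower bound in \textbf{(II)}. Equivalently, one can invoke Corollary~\ref{cor:iff}.

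No step is a real obstacle, since all the substantive work was done in Theorems~\ref{thm:mu-lambda}, \ref{thm:converse}, and~\ref{thm:mu-never}. The one point to check is that the standing assumption of positive first and fourth coordinates is passed unchanged into each cited result, and that each result's hypotheses are covered by \textbf{(I)}--\textbf{(III)}. Theorem~\ref{thm:mu-never} uses only (I) and (II), Corollary~\ref{cor:lambda} uses only part of (II), and Theorem~\ref{thm:converse} uses all three conditions.
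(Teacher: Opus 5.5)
Your proposal is correct and is essentially the paper's proof. Necessity excludes $\mu\ge2$ via Theorem~\ref{thm:mu-never} and then bounds $\lambda$ when $\mu=1$, and sufficiency comes from Theorem~\ref{thm:converse}; the only difference is that the paper cites Corollary~\ref{cor:iff} for both directions with $\mu=1$, whereas you cite its two ingredients, Corollary~\ref{cor:lambda} and Theorem~\ref{thm:converse}, directly.
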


\begin{proof}
Suppose first that
$\mathbf u^{(\mu,\lambda)}$
is a neighbor of the origin.
If $\mu\ge2$, then
Theorem~\ref{thm:mu-never}
implies that
$\mathbf u^{(\mu,\lambda)}$
cannot be a neighbor, a contradiction.
Hence $\mu=1$, and
Corollary~\ref{cor:iff}
shows that $\lambda\le2$.

Conversely, if $\mu=1$ and $\lambda\le2$, then
Corollary~\ref{cor:iff}
implies that
$\mathbf u^{(\mu,\lambda)}
=
\mathbf u^{(1,\lambda)}$
is a neighbor of the origin.
\end{proof}

The explicit family of lattices constructed in
Section~\ref{sec:setup} contains neighbors of the origin
with both values $\lambda=1$ and $\lambda=2$,
showing that the bound $\lambda\le2$ can not be improved.

\end{document}